\newcommand{\Hom}{\ensuremath{\operatorname{Hom}}}
\newcommand{\Endl}{\ensuremath{\operatorname{End}^{l}}}
\newcommand{\Endr}{\ensuremath{\operatorname{End}^{r}}}
\newcommand{\Kmod}{$K$-\textbf{Mod}}
\newproof{rmk}{Remark}
\newdefinition{definition}{Definition}[section]
\newtheorem{lemma}{Lemma}[section]
\newdefinition{example}{Example}[section]
\newproof{proof}{Proof}
\newtheorem{theorem}{Theorem}[section]
\newtheorem{corollary}{Corollary}
\begin{document}

\title{A Bialgebraic Approach to Automata and Formal Language Theory}
\author{James Worthington}
\address{Mathematics Department, Malott Hall, Cornell University, Ithaca, NY
14853-4201 USA}
\ead{worthing@math.cornell.edu}

\begin{abstract}
A bialgebra is a structure which is simultaneously an algebra and a coalgebra, 
such that the algebraic and coalgebraic parts are compatible.  Bialgebras are 
usually studied over a commutative ring.  In this paper, we apply the defining 
diagrams of algebras, coalgebras, and bialgebras to categories of semimodules
and semimodule homomorphisms over a commutative semiring.  We then treat
automata as certain representation objects of algebras and formal languages as elements
of dual algebras of coalgebras.  Using this perspective, we demonstrate many
analogies between the two theories. Finally, we show that there is an adjunction 
between the category of ``algebraic'' automata and the category of deterministic
automata.  Using this adjunction, we show that {\em $K$-linear automaton morphisms} 
can be used as the sole rule of inference in a complete proof system for
automaton equivalence.
\end{abstract}
\maketitle
\section{Introduction}
Automata and formal languages are fundamental objects of study in theoretical 
computer science.  Classically, they have been studied from
an algebraic perspective, focusing on transition matrices of automata, 
algebraic operations defined on formal power series, etc., as in the 
Kleene-Sch\"{u}tzenberger theorem.  More recently, automata have been studied 
from a coalgebraic perspective, focusing on the co-operations of transition 
and observation, and the coalgebraic notion of bisimulation.  See, for 
example, \cite{bib:aacaeic}.

In this paper, we treat automata and formal languages from a {\em bialgebraic}
perspective: one that includes both algebraic and coalgebraic structures, with 
appropriate interactions between the two.  This provides a rich framework to 
study automata and formal languages; using bialgebras, we can succinctly
express operations on automata, operations on languages, maps between automata,
language homomorphisms, and the interactions among them.  We then show that 
automata as representation objects of algebras are related to the standard
notion of a deterministic automaton via an adjunction. 

A note on terminology: there are two uses of the word ``coalgebra'' in the
literature we reference. In an algebra course, one would define ``coalgebra'' as
a variety containing a counit map and the binary operations of addition and
comultiplication; i.e., the formal dual of an algebra (in the ``vector space
with multiplication'' sense). In computer science literature, the word
``coalgebra'' can refer to arbitrary $F$-coalgebras for a given endofunctor $F$ of
{\bf Set}: so-called ``universal coalgebra'' \cite{bib:ucatos}.  Except for
Section \ref{section:determinize} below, our coalgebras are the more specific 
``algebra course'' kind.

While bialgebras are usually studied over a commutative ring $R$, it is
desirable to work over semirings when studying automata and formal languages.
Hence we must define a tensor product for semimodules
over a semiring; we show that a tensor product with the correct universal
property exists when the semiring in question is commutative.  Semimodules over
a semiring are in general not as well-behaved as vector spaces (neither are
modules over a ring). However, free semimodules exist, and have all the useful 
properties that freeness entails.  We remark that we treat input words as
elements of free semimodules, and that the standard definition of a weighted
automaton employs a free semimodule on a finite sets of states.

We then proceed by defining a bialgebra $B$ on the set of all finite words over
an alphabet $\Sigma$.  The algebraic operation of multiplication describes how
to ``put words together''; it is essentially concatenation.  The coalgebraic operation of 
comultiplication, a map $B \rightarrow B \otimes B$, describes how to
``split words apart''; there are several comultiplications of interest.

Given an algebra $A$, we are interested in the structures on which A acts,
i.e., its {\em representation objects}.  We can encode an automaton as a
representation object of an algebra $A$ equipped with a start state and an
observation function. These automata compute elements of the dual module of $A$, which we view as formal 
languages. Automaton morphisms, i.e., linear maps between automata which
preserve the language accepted, are shown to be instances of {\em linear intertwiners}. 
Given a coalgebra $C$, the dual module of $C$ also corresponds to a set of
languages. A standard result is that a comultiplication on a coalgebra defines 
a multiplication on the dual module.  For appropriate bialgebras, these two
views of formal languages interact nicely, and we can use a bialgebra 
construction to ``run two automata in parallel.''

Finally, we show that determinizing an automaton
is essentially forgetting the semimodule structure on its states.
This idea is made precise with functors between categories of algebraic
automata and categories of deterministic automata.  Each
category has its own advantages: algebraic automata can be combined in 
useful ways, and can be nondeterministic, while deterministic automata have
unique minimizations.  An adjunction between these two categories allows
us to prove that a proof system for algebraic automata equivalence is complete;
the rules of inference are automaton morphisms.  This generalizes the proof
system treated explicitly in \cite{bib:apgka} and implicitly in \cite{bib:ko94}
to arbitrary semirings.  

Other authors have explored the role of bialgebras in the theory of automata and formal 
languages. In \cite{bib:bar} and \cite{bib:riob}, Grossman and Larson study the question 
of which elements of the dual of a bialgebra can be represented by the
action of the bialgebra on a finite object and prove the Myhill-Nerode theorem using notions 
from the theory of algebras. Our definition of an automaton is a straightforward generalization
of theirs.  In \cite{bib:ddlawm} and \cite{bib:sdsc}, Duchamp et al. examine
rationality-preserving operations of languages defined using various comultiplications 
on the algebra of input words, and construct the corresponding automata.  They also apply 
these ideas to problems in combinatorial physics.

This paper is organized as follows.  In Section 2, we define algebras, coalgebras, and bialgebras 
over a commutative ring $R$. In Section 3, we give the definitions of semirings and semimodules, 
and recall some useful facts and constructions. Section 4 contains the
construction of the tensor product of two semimodules over a commutative
semiring. Using this definition, in Section 5 we apply the defining diagrams of 
algebras, coalgebras, and bialgebras to
categories of semimodules and semimodule homomorphisms.  
We treat automata as representation objects of algebras in Section 6, and then
treat languages as elements of the dual algebra of a coalgebra in Section 7. 
In Section 8, we combine the algebraic and coalgebraic viewpoints, and show how 
to run automata in parallel if they are representation objects of a bialgebra.
We give the adjunction between deterministic automata and algebraic automata in
Section 9, and the proof system in Section 10.

\section{Algebras, Coalgebras, and Bialgebras}
\label{section:acb}
We now define algebras, coalgebras, and bialgebras over a commutative ring $R$. 
This material is completely standard; see \cite{bib:fqgt} or \cite{bib:qgpca} (note that Hopf
algebras and quantum groups are special cases of  bialgebras).
\subsection{Algebras}
\begin{definition}
Let $R$ be a commutative ring.  An {\em $R$-algebra} $(A,\cdot,\eta)$ is a ring
$A$ together with a ring homomorphism $\eta: R \rightarrow A$ such that
$\eta(R)$ is contained in the center of $A$ and $\eta(1_R) = 1_A$.
\end{definition}
\begin{rmk}
The function $\eta$ is called the {\em unit map} and defines an action of $R$
on $A$ via $ra = \eta(r)a$, so $A$ is also an $R$-module.  
\end{rmk}
To define an $R$-algebra diagrammatically, consider $A$ as an $R$-module.  
Multiplication in $A$ is an $R$-bilinear map $A \times A \rightarrow A$, by distributivity 
and the fact that $\eta(R)$ is contained in the center of $A$.  By the universal
property of the tensor product, multiplication defines a unique $R$-linear map $\mu: A \otimes A \rightarrow A$ 
(all tensor products in this section are over $R$).  Associativity of
multiplication implies that the following diagram commutes: 
$$
\xymatrix{
& A \otimes A \otimes A\ar[dl]_{\mu \otimes 1_A} \ar[dr]^{1_A \otimes \mu} & \\
A \otimes A\ar[dr]_\mu & & A \otimes A\ar[dl]^{\mu} \\
& A. & }
$$
The properties of the unit map can be expressed by the following commutative diagram 
(Recall that $A \otimes R \cong A \cong R \otimes A$): 
$$
\xymatrix{
A \ar@/^1.5pc/[rrrr]^{1_A} \ar@{=>}[rr]^{\eta \otimes 1_A}_{1_A \otimes \eta} & 
& A \otimes A \ar[rr]^{\mu}& & A.}
$$
Hence the diagrammatic definition of an $R$-algebra is an $R$-module $A$ together with $R$-module homomorphisms
$\mu:A \otimes A \rightarrow A$ and $\eta: R \rightarrow A$ such that the above diagrams commute.

\begin{example}
\label{example:one}
Let $R$ be a commutative ring and $P$ be the set of polynomials over
noncommuting variables $x,y$ with coefficients in $R$.  Addition and multiplication of polynomials
make $P$ into a ring.  To make $P$ into an $R$-algebra, define $\eta(r)$ to be
the constant polynomial $p(x,y)=r$ for $r \in R$.
\end{example}
\noindent Structure-preserving maps between algebras are called {\em algebra
maps}.
\begin{definition}
Let $A$ and $B$ be $R$-algebras.  An {\em algebra map} is an $R$-linear map 
$f:A \rightarrow B$ such that $f(a_1a_2)
 = f(a_1)f(a_2)$ for all
$a_1,a_2 \in A$, and $f(1_A) = 1_B$.
Equivalently, an $R$-linear map $f$ such that the following diagrams commute:
$$
\xymatrix{
A \otimes A \ar[r]^{f \otimes f} \ar[d]_{\mu_A} & B \otimes B \ar[d]^{\mu_B} \\
A \ar[r]^f & B}
\xymatrix{
& R \ar[dl]_{\eta_A} \ar[dr]^{\eta_B} & \\
A \ar[rr]^f & & B.}
$$
\end{definition}

\noindent Given two $R$-algebras $A$ and $B$, $A \otimes B$ becomes an
$R$-algebra with multiplication 
$$(a \otimes b) \cdot (a' \otimes b') = aa' \otimes bb'.$$ 
Diagrammatically, this multiplication can be expressed as a morphism $$
\xymatrix{
(A \otimes B) \otimes (A \otimes B) \ar[rr]^{\cong}_{1_A \otimes \sigma \otimes 1_B} & &
(A \otimes A) \otimes (B \otimes B) \ar[rr]^{\hspace{8 mm}\mu_A \otimes \mu_B} & &
A \otimes B.}
$$
Here $\sigma: A \otimes B \rightarrow B \otimes A$; $\sigma(a \otimes b) = (b \otimes a)$ is the 
usual transposition map. \\
The unit of $A \otimes B$ is given by
$$
\xymatrix{
R \ar[r]^{\cong \hspace{3.5 mm}} & R \otimes R \ar[rr]^{\eta_A \otimes \eta_B}
& & A \otimes B.} $$

\subsection{Coalgebras}
\begin{definition}
Let $R$ be a commutative ring.  An $R$-{\it coalgebra} $(C,\Delta,\epsilon)$ is an $R$-module $C$ 
together with an $R$-linear coassociative function $\Delta: C \rightarrow C
\otimes C$, called {\it comultiplication}, and an $R$-linear {\it counit} map 
$\epsilon: C \rightarrow R$, which satisfy the diagrams below. 
\end{definition}
\noindent Coassociativity of $\Delta$ means that the following diagram commutes:
$$
\xymatrix{
& C \otimes C \otimes C  & \\
C \otimes C\ar[ur]^{\Delta \otimes 1_C} & & C \otimes C\ar[ul]_{1_C \otimes \Delta} \\
& C. \ar[ul]^\Delta \ar [ur]_\Delta&}
$$
\noindent Diagrammatically, the axioms of the counit map are given by:
$$
\xymatrix{
C \ar[rr]^{\Delta}  \ar@/^1.5pc/[rrrr]^{1_C} & & C \otimes C \ar@{=>}[rr]^{\epsilon \otimes 1_C}_{1_C \otimes \epsilon} 
& & C.}
$$
When performing calculations involving comultiplication, we sometimes use the
expression
$$\Delta(c) = \sum_i c_{(1)} \otimes c_{(2)}$$
to express how $c$ is ``split'' into elements of $C \otimes C$.
\begin{example}
\label{example:two}
Let $P$ the set of polynomials over noncommuting variables $x,y$ with
coefficients in $R$ from Example \ref{example:one}. The map $\Delta: P
\rightarrow P \otimes P$, defined on monomials $w$ by $\Delta(w) = w \otimes w$ 
and extended linearly to all of $P$, is coassociative. Defining the counit map
$\epsilon: P \rightarrow R$ to be evaluation at (1,1) makes $(P, \Delta,
\epsilon)$ into an $R$-coalgebra.
\end{example}
\noindent Coalgebras also have structure-preserving maps.
\begin{definition}
Let $C,D$ be $R$-coalgebras.  A {\it coalgebra map} is an $R$-module homomorphism $g: C \rightarrow D$ such that 
the following diagrams commute:
$$
\xymatrix{
C \otimes C \ar[r]^{g \otimes g} & D \otimes D
\\ C \ar[r]^g \ar[u]^{\Delta_C} & D \ar[u]_{\Delta_D}}
\xymatrix{ & R & \\
C \ar[rr]^g \ar[ur]^{\epsilon_C} & & D \ar[ul]_{\epsilon_D}.}
$$
\end{definition}
\noindent Given $R$-coalgebras $C$ and $D$, there is a natural $R$-coalgebra
structure on $C \otimes D$.  Comultiplication and counit are defined by
$$
\xymatrix{
C \otimes D \ar[rr]^{\Delta_C \otimes \Delta_D \hspace{9 mm}} &
&
(C \otimes C) \otimes (D \otimes D) \ar[rr]^{\cong}_{1_C \otimes \sigma \otimes 1_D} & &
(C \otimes D) \otimes (C \otimes D).}
$$
$$
\xymatrix{
C \otimes D \ar[rr]^{\epsilon_C \otimes \epsilon_D \hspace{3 mm}} & & R \otimes
R \cong R.} $$
\subsection{Bialgebras}
\begin{definition}
Let $R$ be a commutative ring.  An $R$-{\it bialgebra} $(B,\mu,\eta,\Delta,\epsilon)$ is an $R$-module $B$ which
is a both an $R$-algebra and an $R$-coalgebra, which also satisfies:
$$\Delta(ab) = \Delta(a)\Delta(b),~\Delta(1) = 1 \otimes 1,~\epsilon(ab)=\epsilon(a)\epsilon(b),~\epsilon(1)=1.
$$
\end{definition}
\noindent Note that the product $\Delta(a)\Delta(b)$ takes place in the algebra structure on \\
$B \otimes B$.
The defining diagrams for a bialgebra are as follows:
$$
\xymatrix{
B \otimes B \ar[r]^\mu \ar[d]_{\Delta \otimes \Delta} & B \ar[r]^\Delta& B \otimes B \\
B \otimes B \otimes B \otimes B \ar[rr]^{1_B \otimes \sigma \otimes 1_B}&  & B \otimes B \otimes B \otimes B \ar[u]_{
\mu \otimes \mu}}
$$
$$
\xymatrix{
B \otimes B \ar[r]^{\epsilon \otimes \epsilon} \ar[d]_{\mu} & R \otimes R \ar[r]^{\eta \otimes \eta} \ar[d]_{\cong} & 
B \otimes B\\
B \ar[r]^{\epsilon} & R \ar[r]^{\eta} & B \ar[u]_{\Delta}}
\xymatrix{
 & B \ar[dr]^{\epsilon} & \\
R \ar[ur]^{\eta} \ar[rr]^{1_R} & & R.}
$$
\begin{rmk}
The following are equivalent:
\begin{enumerate}
\item $B$ is a bialgebra,
\item $\mu:B \otimes B \rightarrow B$ and $\eta:R \rightarrow B$ are
$R$-coalgebra maps,
\item $\Delta: B \rightarrow B \otimes B$ and $\epsilon: B \rightarrow R$ are
$R$-algebra maps.
\end{enumerate}
\end{rmk}
\noindent Note the ``self-duality'' of the defining diagrams of a bialgebra: swapping $\Delta$ for $\mu$, 
$\epsilon$ for $\eta$, and
reversing the direction of all arrows yields the same diagrams.
\begin{example}
\label{example:three}
The set of polynomials $P$ with the $R$-algebra structure of Example
\ref{example:one} and $R$-coalgebra structure of Example \ref{example:two} 
is an $R$-bialgebra.
\end{example}
\begin{example}
\label{example:four}
More generally, let $M$ be a monoid and $R$ a commutative ring.  Let $R(M)$ be the free $R$-module 
on $M$. Define multiplication in $R(M)$ by extending multiplication in $M$
linearly. Then $R(M)$ is an $R$-algebra with unit map $\eta(r) = r1_M$. There is an 
$R$-coalgebra structure on $R(M)$; define $$\Delta(m) = m \otimes m$$ $$\epsilon(m) = 1$$ for
$m \in M$ and extend linearly to $R(M)$.  A straightforward calculation shows that $R(M)$ is 
an $R$-bialgebra.
\end{example}
\noindent Finally, we give the definition of a bialgebra map.
\begin{definition}
Let $B, B'$ be bialgebras.  An $R$-linear map $f: B \rightarrow B'$ is a {\em
bialgebra map} if $f$ is both an algebra map and a coalgebra map.
\end{definition}
\section{Semirings and Semimodules}
When studying automata and formal languages, it is natural to work over
{\em semirings}, which are ``rings without subtraction''.
\begin{definition}
A {\it semiring} is a structure $(K,+,\cdot,0,1)$ such that $(K,+,0)$ is a 
commutative monoid, $(K,\cdot,1)$ is a monoid,
and the following laws hold:
$$j(k+l) = jk + jl$$
$$(k+l)j = kj + lj$$
$$0k = k0 = 0$$
for all $j,k,l \in K$. If $(K, \cdot, 1)$ is a commutative monoid, then $K$
is said to be a {\em commutative semiring}.  If $(K,+,0)$ is an idempotent
monoid, then $K$ is said to be an {\em idempotent semiring}.  
\end{definition}
The representation objects of semirings are known as {\em semimodules}.
\begin{definition} 
Let $K$ be a semiring.  A {\em left $K$-semimodule} is a commutative monoid
$(M,+,0)$ along with a left action of $K$ on $M$.
The action satisfies the following axioms:
$$(j + k)m = jm + km$$
$$j(m + n) = jm + jn$$
$$(jk)m = j(km)$$
$$1_Km = m$$
$$k0_M = 0_M = 0_Km$$
for all $j,k \in K$ and $m,n \in M$.  If addition in $M$ is
idempotent, $M$ is said to be an {\em idempotent left $K$-semimodule}.
\end{definition}
\noindent Right $K$-semimodules are defined analogously; in the sequel we
give only ``one side'' of a definition.  If $K$ is commutative, then every
left $K$-semimodule can be regarded as a right $K$-semimodule, and vice versa.  
In this case, we omit the words ``left'' and ``right''.  

\begin{example}
Let $K$ be a semiring and $m,n$ be positive integers.  The set of $m \times n$
matrices over $K$ is a left $K$-semimodule, and the set of $m \times m$
matrices over $K$ is a semiring, using the standard definitions of
matrix addition, multiplication, and left scalar multiplication.
\end{example}

Semimodules can be combined using the operations of direct sum and direct product.

\begin{definition}
Let $K$ be a semiring and $\{M_i~|~ i \in I\}$ be a collection of
left $K$-semimodules for some index set $I$.
Let $M$ be the cartesian product of the underlying sets of the $M_i$'s.
The {\it direct product} of the $M_i$'s, denoted $\prod M_i$, is the set $M$ 
endowed with pointwise addition and scalar multiplication. The {\it direct
sum} of the $M_i$'s, denoted $\bigoplus M_i$, is the subsemimodule of 
$\prod M_i$ in which all but finitely many of the coordinates are 0.
\end{definition}
\begin{rmk}
As usual, direct products and direct sums coincide when $I$ is finite.
\end{rmk}
\noindent Homomorphisms, congruence relations, and factor semimodules 
are all defined standardly.
\begin{definition}
Let $K$ be a semiring and $M,N$ be left $K$-semimodules.  A function 
$\phi:M \rightarrow N$ is a {\it left K-semimodule homomorphism} if
$$\phi(m + m') = \phi(m) + \phi(m') {\rm ~for~ all~} m,m' \in M$$
$$\phi(km) = k\phi(m) {\rm ~for~ all~} m \in M, k \in K.$$
Such $\phi$ are also called $K$-{\it linear maps}.
\end{definition}
\begin{definition}
For a given semiring $K$, let \Kmod \text{ } be the category
of left $K$-semimodules and $K$-linear maps.
\end{definition}
\begin{definition}
Let $K$ be a semiring, $M$ a left $K$-semimodule, and $\equiv$ an equivalence 
relation on $M$.  Then $\equiv$ is a 
{\it congruence relation} if and only if 
$$m \equiv m' ~\mbox{and}~ n \equiv n' \mbox{ implies } m + n \equiv m' + n'$$
$$m \equiv m' \mbox{ implies } km \equiv km'$$ 
for all $k \in K$, $m,m',n,n' \in M$.
\end{definition}
\begin{definition}
Let $K$ be a semiring, $M$ a left $K$-semimodule, and $\equiv$ a congruence 
relation on $M$.  For each $m \in M$, let $[m]$ be the
equivalence class of $m$ with respect to $\equiv$.  Let $M/ \equiv$ be 
the set of all such equivalence classes.  Then $M / \equiv$
is a left $K$-semimodule with the following operations:
$$[m] + [n] = [m+n]$$
$$k[m] = [km]$$
for all $m,n \in M, k \in K$.  This semimodule is known as the 
{\it factor semimodule} of $M$ by $\equiv$.
\end{definition}
\begin{definition}
Let $K$ be a semiring and $X$ a nonempty set.  The {\em free left $K$-semimodule
on $X$} is the set of all finite formal sums of the form
$$k_1x_1 + k_2x_2 + \cdot \cdot \cdot + k_nx_n$$
with $k_i \in K$ and $x_i \in X$, i.e., the set of all $f \in K^X$ with finite
support. Addition and the action of $K$ are defined pointwise.  
\end{definition}
Equivalently, one can define a left $K$-semimodule $M$ to be free if
and only if $M$ has a basis \cite{bib:sata}.
\begin{definition}
Let $M$ be a left $K$-semimodule and $X$ a nonempty subset of $M$.
Then there is a $K$-linear map $\phi$ from the left $K$-semimodule of 
all functions $f \in K^X$ with finite support to $M$ given
by
$$\phi(f) = \sum_{x \in X} f(x)x.$$
If $\phi$ is surjective, then $X$ is said to be a 
{\em set of generators} of $M$.  If $\phi$ is injective,
then $X$ is said to be {\em linearly independent}.
If $\phi$ is a bijection, then $X$ is said to be a 
{\em basis} of $M$.
\end{definition}
\begin{rmk}
If $M$ is a left $K$-semimodule with a basis of size $m \in \mathbb{N}$,
and $N$ is a left $K$-semimodule with a basis of size $n \in \mathbb{N}$,
then a $K$-linear map from $M$ to $N$ can be
represented by an $n \times m$ matrix over $K$.
\end{rmk}
In the sequel, we use elementary facts about factor semimodules, 
free semimodules, congruence relations, and homomorphisms without comment. 
See \cite{bib:sata} for proofs.  
\begin{definition}
Let $K$ be a commutative semiring and $M$ a $K$-semimodule.  The
set of all $K$-linear maps $M \rightarrow K$ is denoted \Hom(M,K).
\end{definition}
\begin{rmk}
In the sequel, the notation \Hom(X,Y) always refers to the set
of $K$-linear maps between $X$ and $Y$, considered as $K$-semimodules,
even if $X$ and $Y$ have additional structure.
\end{rmk}
We end this section with two useful lemmas 
concerning dual semimodules.  The proofs are simple generalizations
of the standard proofs for the case when $K$ is a ring.
\begin{lemma}
\label{lemma:hom}
Let $K$ be a commutative semiring and $M$ a $K$-semimodule.  The set
\Hom$(M,K)$ can be endowed with a $K$-semimodule structure.
\end{lemma}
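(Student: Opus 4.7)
The plan is to equip $\Hom(M,K)$ with pointwise operations inherited from the commutative monoid structure on $K$ and from the action of $K$ on itself. For $f,g \in \Hom(M,K)$ I will set $(f+g)(m) := f(m) + g(m)$ and, for $k \in K$, $(kf)(m) := k \cdot f(m)$, with the zero element being the constant map $m \mapsto 0_K$. Once these definitions are in place, the work splits into two stages: first, check that $f+g$ and $kf$ are themselves $K$-linear (so the operations land in $\Hom(M,K)$); second, verify each semimodule axiom.

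For the closure step, additivity of $f+g$ and $kf$ with respect to $+$ in $M$ is immediate from commutativity and associativity of $+$ in $K$ together with additivity of $f$ and $g$. Compatibility of $f+g$ with the $K$-action uses distributivity in $K$. The one place that needs care is compatibility of $kf$ with the $K$-action: one has $(kf)(\ell m) = k(f(\ell m)) = k(\ell f(m)) = (k\ell)f(m) = (\ell k) f(m) = \ell (k f(m)) = \ell (kf)(m)$, where the crucial middle equality is commutativity of multiplication in $K$. This is the single substantive hypothesis being used, and is the main (and essentially only) obstacle in the proof.

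With closure in hand, each of the semimodule axioms on $\Hom(M,K)$ — commutativity and associativity of $+$, the identities $k(f+g) = kf + kg$ and $(k+\ell)f = kf + \ell f$, associativity $(k\ell)f = k(\ell f)$, the unit law $1_K f = f$, and the zero laws $0_K f = 0 = k \cdot 0$ — is verified by evaluating both sides at an arbitrary $m \in M$ and invoking the corresponding axiom of the commutative semiring $K$. Thus the entire argument is a routine pointwise transfer of identities from $K$ to $\Hom(M,K)$, with commutativity of $K$ isolated to the single point described above.
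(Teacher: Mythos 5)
Your proposal is correct and follows the same route as the paper's proof: pointwise operations, with the single substantive point being that commutativity of $K$ is what makes $k\cdot f$ again $K$-linear (via $kk'f(x) = k'kf(x)$), which is exactly the step the paper isolates. The remaining axiom checks are routine pointwise transfers, as you say.
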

\begin{proof}
\Hom$(M,K)$ is a commutative monoid under
pointwise addition.  Let $f \in$ \Hom$(M,K)$.
The action of $K$ on \Hom$(M,K)$, denoted $\cdot$, is defined 
by $k\cdot(f(m)) = kf(m)$.  Commutativity of $K$ is needed to show that the resulting 
functions are $K$-linear. Since $f$ is $K$-linear, 
$k \cdot f(k'x) = k \cdot k'f(x) = kk'f(x)$.  In order for $k\cdot f$ to be
$K$-linear, we must have $k\cdot f(k'x) = k'k \cdot f(x) = k'kf(x)$.  This
means the equation $kk'f(x) = k'kf(x)$ must hold, which is the case if $K$ is commutative.
\end{proof}
\begin{lemma}
\label{lemma:finitedual}
Let $K$ be a commutative semiring, $X$ be a finite nonempty set, and $F$ the
free $K$-semimodule on $X$.  Then $\Hom(F,K)$ is also a free $K$-semimodule
on a set of size $|X|$.  
\end{lemma}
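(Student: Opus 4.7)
The plan is to exhibit an explicit basis for $\Hom(F,K)$ of size $|X|$ by dualizing the given basis $X$ of $F$. For each $x \in X$, define $x^{*}: F \to K$ to be the unique $K$-linear map determined by $x^{*}(y) = 1_{K}$ if $y = x$ and $x^{*}(y) = 0_{K}$ otherwise, for $y \in X$. Such a map exists and is unique by the universal property of the free $K$-semimodule $F$ on $X$: any set map $X \to K$ extends uniquely to a $K$-linear map $F \to K$. I then claim $X^{*} := \{x^{*} \mid x \in X\}$ is a basis of $\Hom(F,K)$, which by Lemma~\ref{lemma:hom} is a $K$-semimodule.

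First I would show $X^{*}$ generates. Given any $f \in \Hom(F,K)$, consider the finite formal sum $g := \sum_{x \in X} f(x) \cdot x^{*}$; since $X$ is finite, this is a well-defined element of $\Hom(F,K)$. Evaluating at an arbitrary basis element $y \in X$ gives $g(y) = \sum_{x \in X} f(x) x^{*}(y) = f(y)$ by the defining property of $x^{*}$. Since $f$ and $g$ are $K$-linear maps that agree on the basis $X$ of $F$, they agree on all of $F$, so $f = g$ lies in the span of $X^{*}$. Then I would verify linear independence: suppose $\sum_{x \in X} k_{x} \cdot x^{*} = 0$ in $\Hom(F,K)$, where the sum is finite. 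Evaluating at any $y \in X$ yields $\sum_{x \in X} k_{x} x^{*}(y) = k_{y} = 0_{K}$, so every coefficient vanishes.

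Combining these two facts, the map $\phi: K^{X^{*}} \to \Hom(F,K)$ sending a function of finite support $(k_{x^{*}})_{x \in X}$ to $\sum k_{x^{*}} \cdot x^{*}$ is a $K$-linear bijection, so by the definition of basis given in Section~3, $X^{*}$ is a basis of $\Hom(F,K)$. Since $|X^{*}| = |X|$, this completes the proof.

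No step here is a serious obstacle, since finiteness of $X$ removes the usual support-size issues that cause duals of free modules to blow up in the infinite case; the only mild subtlety is being careful that the universal property of $F$ and the definition of basis given earlier in the excerpt are what justify both the existence/uniqueness of each $x^{*}$ and the passage from agreement on $X$ to equality on $F$.
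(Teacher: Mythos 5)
Your construction of the dual family $\{x^{*} \mid x \in X\}$ and your argument that it generates $\Hom(F,K)$ are exactly the paper's proof (the paper calls these maps $f_i$). The gap is in your verification of linear independence. Over a semiring there is no subtraction, and the paper's definition of linear independence (Section 3) requires the evaluation map $\phi$ from finitely supported functions on $X^{*}$ to $\Hom(F,K)$ to be \emph{injective}: that is, $\sum_{x} k_{x}\, x^{*} = \sum_{x} k_{x}'\, x^{*}$ must force $k_{x} = k_{x}'$ for every $x$. You only check the special case $\sum_{x} k_{x}\, x^{*} = 0 \Rightarrow k_{x} = 0$. For modules over a ring these two conditions are equivalent (pass to the difference), but for semimodules a trivial ``kernel'' does not imply injectivity, so your concluding claim that $\phi$ is a bijection does not follow from what you actually verified.

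The repair is immediate and is precisely what the paper does: suppose $\sum_{x} k_{x}\, x^{*} = \sum_{x} k_{x}'\, x^{*}$ as elements of $\Hom(F,K)$, evaluate both sides at an arbitrary basis element $y \in X$, and conclude $k_{y} = k_{y}'$. With that one change your proof coincides with the paper's.
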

\begin{proof}
Let $x_1,x_2,\ldots,x_n$ be a basis of $F$ and $f_i \in \Hom(F,K)$
be such that $f_i(x_j) = 1$ if  $i = j$ and 0 otherwise. 
We claim that the $f_i$'s are a basis
of $\Hom(F,K)$. Let $g \in \Hom(F,K)$ and $a_i = g(x_i)$.
The $f_i$'s form a generating set because 
$$g(k_1x_1 + k_2x_2 + \cdots +  k_nx_n) = k_1g(x_1) + k_2g(x_2) + \cdots +
k_ng(x_n),$$ 
and so $g = a_1f_1 + a_2f_2 + \cdots + a_nf_n$.
Moreover, the $f_i$'s are linearly independent; if
$$j_1f_1 + j_2f_2 + \cdots + j_nf_n = j_1'f_1 + j_2'f_2 + \cdots + j_n'f_n,$$ 
then evaluating each side on $x_i$ yields $j_i = j_i'$.
\end{proof}

\section{Tensor Products over Commutative Semirings}
We wish to apply the defining diagrams of algebras, coalgebras, and bialgebras to 
categories of $K$-semimodules and $K$-linear maps.  To do this, we need a 
notion of the tensor product of $K$-semimodules. 
Unfortunately, the literature contains multiple inequivalent definitions of the 
tensor product of $K$-semimodules: the tensor  product as defined in
\cite{bib:sata} is not the same as the tensor product defined in \cite{bib:tpis} 
or \cite{bib:tpie}.  In fact, the tensor product defined in \cite{bib:sata} is the
trivial $K$-semimodule when applied to idempotent $K$-semimodules.

We proceed by assuming that $K$ is commutative and mimicking the construction of 
the tensor product of modules over a 
commutative ring in \cite{bib:lang}.  This is essentially the construction used
in \cite{bib:tpis} and \cite{bib:tpie}.  The point is to work in the appropriate 
category and construct an object with the appropriate universal property.

We recall the universal property of the tensor product over a commutative ring $R$. 
Let $M_1,M_2,...,M_n$ be $R$-modules.  Let $\mathcal{C}$ be the category whose objects 
are $n$-multilinear maps 
$$f: M_1 \times M_2 \times \cdot \cdot \cdot \times M_n \rightarrow F$$
where $F$ ranges over all $R$-modules.  To define the morphisms of $\mathcal{C}$, let 
$$f: M_1 \times M_2 \times \cdot \cdot \cdot \times M_n \rightarrow F \text{~and~}
g:M_1 \times M_2 \times \cdot \cdot \cdot \times M_n \rightarrow G$$
be objects of $\mathcal{C}$.  A morphism $f \rightarrow g$ is an $R$-linear map
$h: F \rightarrow G$ such that $h \circ f = g$. A {\it tensor product} of
$M_1,M_2,...,M_n$, denoted $M_1 \otimes_R M_2 \otimes_R \cdot \cdot \cdot \otimes_R M_n$,
is an initial object in this category. 
When it is clear from context, we omit the subscript on the $\otimes$ symbol. 
By a standard argument, the tensor product is unique up to isomorphism.

We now construct the tensor product of semimodules over a commutative
semiring.  Let $K$ be a commutative semiring and $M_1,M_2,...,M_n$ be
$K$-semimodules. Let $T$ be the free $K$-semimodule on the (underlying) set \\   
$M_1 \times M_2 \times \cdot \cdot \cdot \times M_n$. Let $\equiv$ be the
congruence relation on $T$ generated by the equivalences $$(m_1,...,m_i +_{M_i}
m_i',...,m_n) \equiv (m_1,...,m_i,...,m_n) +_T (m_1,...,m_i',...,m_n)$$ 
$$(m_1,...,km_i,...,m_n) \equiv k(m_1,...,m_i,...,m_n)$$
for all $k \in K, m_i,m_i' \in M_i, 1 \leq i \leq n$.

Let $i:M_1 \times M_2 \times \cdot \cdot \cdot \times M_n \rightarrow T$ be the canonical injection
of $M_1 \times M_2 \times \cdot \cdot \cdot \times M_n$ into $T$. 
Let $\phi$ be the composition of $i$ and the quotient map $q:T \rightarrow T / \equiv$.  
\begin{lemma}
The map $\phi$ is multilinear and is a tensor product of \\
$M_1, M_2,...,M_n$.
\end{lemma}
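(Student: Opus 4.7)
The plan is to verify the two parts separately: first that $\phi$ is multilinear, then that it is initial in the category $\mathcal{C}$ of multilinear maps out of $M_1 \times \cdots \times M_n$.

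For multilinearity, the congruence $\equiv$ was defined precisely so that the relations witnessing linearity in each coordinate hold in $T/\equiv$. Applying $q$ to both sides of the two generating equivalences, and using that $q$ is a $K$-semimodule homomorphism, yields directly that $\phi = q \circ i$ satisfies $\phi(m_1,\ldots,m_i + m_i',\ldots,m_n) = \phi(m_1,\ldots,m_i,\ldots,m_n) + \phi(m_1,\ldots,m_i',\ldots,m_n)$ and $\phi(m_1,\ldots,km_i,\ldots,m_n) = k\phi(m_1,\ldots,m_i,\ldots,m_n)$. This step is essentially unpacking definitions.

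For the universal property, given a multilinear map $f : M_1 \times \cdots \times M_n \to F$, I would first use the universal property of the free $K$-semimodule $T$ to extend $f$ uniquely to a $K$-linear map $\hat{f}: T \to F$ with $\hat{f} \circ i = f$. The goal is then to descend $\hat{f}$ across the quotient $q$ to obtain a $K$-linear $\tilde{f}: T/\equiv \, \to F$ satisfying $\tilde{f} \circ \phi = f$. Uniqueness of $\tilde{f}$ will follow because the image of $\phi$ generates $T/\equiv$ as a $K$-semimodule (the image of $i$ generates $T$, and $q$ is surjective), so any two $K$-linear extensions that agree on $\phi(M_1\times\cdots\times M_n)$ coincide.

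The main obstacle is the descent step. In the module case one would argue via a kernel submodule, but semimodules lack subtraction, so one must reason directly with congruences. The trick is to consider the relation $t \sim t'$ on $T$ defined by $\hat{f}(t) = \hat{f}(t')$: since $\hat{f}$ is $K$-linear, $\sim$ is automatically a congruence relation on $T$. Multilinearity of $f$ implies $\hat{f}$ sends each of the generating pairs of $\equiv$ to equal elements of $F$, so $\sim$ contains those generators; because $\equiv$ is the smallest congruence containing them, we get $\equiv \,\subseteq\, \sim$. Hence $\tilde{f}([t]) := \hat{f}(t)$ is well-defined, and inherits $K$-linearity from $\hat{f}$. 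This congruence-generated-by argument is the only place where the semiring setting requires genuinely different bookkeeping than the classical ring case.
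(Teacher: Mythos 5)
Your proposal is correct and follows essentially the same route as the paper: extend the multilinear map to the free semimodule, observe that the induced kernel congruence contains the defining congruence $\equiv$ (your $\sim$ is the paper's $\equiv_\gamma$), descend to the quotient, and get uniqueness because the image of $\phi$ generates $T/\equiv$. The only difference is that you spell out the multilinearity check and the ``smallest congruence'' reasoning in more detail than the paper does.
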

\begin{proof}
Multilinearity of $\phi$ is obvious from its definition.  Let $G$ be a $K$-semimodule and 
$$g:M_1 \times M_2 \times \cdot \cdot \cdot \times M_n \rightarrow G$$
be a $K$-multilinear map.  By freeness of $T$, there is an induced $K$-linear map 
$\gamma: T \rightarrow G$ such that the following diagram commutes:
$$
\xymatrix{
 & T \ar[dd]^{\gamma} \\
 M_1 \times M_2 \times \cdot \cdot \cdot \times M_n \ar[ur]^i \ar[dr]_g & \\
 & G.}
$$
The homomorphism $\gamma$ defines a congruence relation, denoted $\equiv_{\gamma}$, on $T$ via
$$t \equiv_{\gamma} t' \text{ if and only if } \gamma(t) = \gamma(t')$$
for all $t,t' \in T$.  Since $g$ is $K$-multilinear, we have $\equiv~ \subseteq ~ \equiv_{\gamma}$, 
where $\equiv$ is the congruence relation used in the definition of the tensor product.  Therefore 
$\gamma$ can be factored through $T / \equiv$, and there is a $K$-linear map
$$g_*: T / \equiv \rightarrow G$$ 
making the following diagram commute: 
$$
\xymatrix{
 & T / \equiv \ar[dd]^{g_*} \\
M_1 \times M_2 \times \cdot \cdot \cdot \times M_n \ar[ur]^{\phi} \ar[dr]_g \\
 & G.}
$$
The image of $\phi$ generates $T / \equiv$, so $g_*$ is uniquely determined.
\end{proof}
For $x_i \in M_i$, we denote $\phi(x_1,x_2,...,x_n)$ by $x_1 \otimes x_2 \otimes \cdot \cdot \cdot \otimes x_n$. Tensor 
products enjoy many useful properties.

\begin{lemma}
\label{lemma:tensorproperties}
Let $K$ be a commutative semiring and $N, M_1,M_2,...,M_n$ be $K$-semimodules.  Then:
\begin{enumerate}
\item There is a unique isomorphism
$$(M_1 \otimes M_2) \otimes M_3 \rightarrow M_1 \otimes (M_2 \otimes M_3)$$
such that 
$(m_1 \otimes m_2) \otimes m_3 \mapsto m_1 \otimes (m_2 \otimes m_3)$ for all
$m_i \in M_i$.
\item There is a unique isomorphism $M_1 \otimes M_2 \rightarrow M_2 \otimes M_1$ such that \\
$m_1 \otimes m_2 \mapsto m_2 \otimes m_1$
for all $m_i \in M_i$.
\item $K \otimes M_1 \cong M_1$
\item Let $\phi:M_1 \rightarrow M_3$ and $\psi:M_2 \rightarrow M_4$ be $K$-linear maps.  There is a 
unique $K$-linear map $\phi \otimes \psi: M_1 \otimes M_2 \rightarrow M_3
\otimes M_4$ such that \\ $(\phi \otimes \psi)(m_1 \otimes m_3) = \phi(m_1) \otimes
\psi(m_2)$ for all $m_1 \in M_1, m_2 \in M_2$.
\item $N \otimes \bigoplus_{i \in I} M_i \cong \bigoplus_{i \in I} N \otimes
M_i$ for any index set $I$.
\item Let $M$,$N$ be free $K$-semimodules, with bases $\{m_i\}_{i \in I}$ and
$\{n_j\}_{j \in J}$, respectively.  Then $M \otimes N$ is a free $K$-semimodule
with basis $\{m_i \otimes n_j\}$.
\end{enumerate}
\end{lemma}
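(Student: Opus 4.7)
The plan is to prove each of the six items by invoking the universal property of the tensor product established in the previous lemma. In each case the recipe is the same: build a candidate multilinear map into the target, lift it through $\phi$ to a $K$-linear map out of the tensor product, and then produce an inverse by the same technique (or by exhibiting images on a generating set).

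For part (1), for each fixed $m_3 \in M_3$ the assignment $(m_1,m_2) \mapsto (m_1 \otimes m_2) \otimes m_3$ is $K$-bilinear and factors through $\phi$ to give a map $M_1 \otimes M_2 \to (M_1 \otimes M_2) \otimes M_3$; letting $m_3$ vary, one checks $K$-linearity in that slot, so the induced map $M_1 \times M_2 \times M_3 \to (M_1 \otimes M_2) \otimes M_3$ is $K$-trilinear. An iterated application of the universal property shows $(M_1 \otimes M_2) \otimes M_3$ satisfies the universal property of the triple tensor product, and the same holds for $M_1 \otimes (M_2 \otimes M_3)$; uniqueness of initial objects yields the canonical isomorphism. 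Part (2) follows from the bilinear map $(m_1,m_2) \mapsto m_2 \otimes m_1$ and its symmetric inverse. For part (3), the scalar action $(k,m) \mapsto km$ is $K$-bilinear, and its inverse sends $m \mapsto 1_K \otimes m$, with $1_K m = m$ ensuring mutual inversion. For part (4), the map $(m_1,m_2) \mapsto \phi(m_1) \otimes \psi(m_2)$ is bilinear, and applying the universal property to $M_1 \otimes M_2$ produces the unique $\phi \otimes \psi$.

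For part (5), construct mutually inverse maps: the bilinear assignment $(n,(m_i)_{i \in I}) \mapsto (n \otimes m_i)_{i \in I}$ is well defined since $(m_i)$ has finite support, and lifts to $N \otimes \bigoplus_i M_i \to \bigoplus_i (N \otimes M_i)$; conversely, for each $i$ the inclusion $M_i \hookrightarrow \bigoplus_j M_j$ yields a bilinear map $N \times M_i \to N \otimes \bigoplus_j M_j$, inducing $N \otimes M_i \to N \otimes \bigoplus_j M_j$, and these assemble via the universal property of the direct sum into the reverse map. Mutual inversion is immediate on elementary tensors. Finally, part (6) follows by combining the previous parts: a free $K$-semimodule on $\{m_i\}_{i \in I}$ is isomorphic to $\bigoplus_{i \in I} K$, so by (5) and (3), $M \otimes N \cong \bigoplus_{i \in I} (K \otimes N) \cong \bigoplus_{i \in I} N \cong \bigoplus_{i \in I}\bigoplus_{j \in J} K$, with the elements $m_i \otimes n_j$ forming the corresponding basis.

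The only real subtlety is ensuring that each candidate inverse, defined in the first instance on the free semimodule $T$, respects the congruence relation $\equiv$ used to construct the tensor product in parts (1), (2), and (5). This is exactly the compatibility packaged by the universal property: since every candidate is specified as the composition of a multilinear map with $\phi$, the lift through $T/\!\equiv$ exists automatically, and no direct manipulation of the defining equivalences is required. I expect this to be the main conceptual point; the remaining verifications are straightforward on generators.
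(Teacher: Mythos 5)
Your proposal is correct and follows essentially the same route as the paper, which simply observes that the proofs in Lang rely only on the universal property of the tensor product and therefore carry over verbatim to semimodules over a commutative semiring; you have written out exactly those universal-property arguments. Your closing observation, that defining every map as a multilinear map composed with $\phi$ avoids any direct manipulation of the congruence $\equiv$, is precisely the point the paper is making by citation.
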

\begin{proof}
In \cite{bib:lang}, these properties are proven for tensor products over commutative rings. 
The proofs rely on the universal property of the tensor product and are also
valid in this case.
\end{proof}
\section{$K$-algebras, $K$-coalgebras, and $K$-bialgebras}
Let $K$ be a commutative semiring.  We define $K$-algebras, $K$-coalgebras, 
$K$-bialgebras, and their respective maps by applying the relevant diagrams
from Section \ref{section:acb} to the category of $K$-semimodules and $K$-linear
maps.  To avoid clumsy terminology, we do not use the terms ``semi-algebra'',
``semi-coalgebra'', or ``semi-bialgebra''.
\begin{example}
\label{example:classical}
Let $\Sigma = \{x,y\}$ be a set of noncommuting variables.  Let $P$ be the set of polynomials 
over $\Sigma$ with coefficients from the two-element idempotent semiring $K$.
Multiplication of polynomials is readily seen to be a $K$-bilinear function $P
\times P \rightarrow P$, and therefore corresponds to a $K$-linear map $P \otimes_K P \rightarrow P$. Moreover, this map 
satisfies the associativity diagram.
The underlying $K$-semimodule of $P$ is the free $K$-semimodule on the set of
all words $w$ over $\{x,y\}$, so $P \otimes P$ is the free $K$-semimodule with
basis $\{w \otimes w'\}$ by Lemma \ref{lemma:tensorproperties}.6.  The $K$-linear map
$\eta:K \rightarrow P$ such that $\eta(k) \mapsto \lambda xy.k$ satisfies the defining diagram of the 
unit map, and so $P$ together with these maps forms a $K$-algebra.  

The $K$-linear map $\Delta$ defined on monomials as $\Delta(w) = w \otimes w$
and extended linearly to all of $P$ is easily seen to be coassociative.  Defining $\epsilon(p(x,y)) = p(1,1)$ makes
$P$ into a $K$-coalgebra.  Furthermore, these maps satisfy the compatibility condition of a $K$-bialgebra, 
so $P$ is a $K$-bialgebra.  

We refer to constructions involving $P$ as ``the classical case'' throughout
the sequel.
\end{example}
\begin{example}  Given any set $X$ and commutative semiring $K$,
it follows from general considerations that there is a free $K$-algebra
on $X$, which we denote $KX^*$, and furthermore that there is an adjunction
between the category of $K$-algebras and $K$-algebra maps and {\bf Set}.  
\end{example}
One can associate two $K$-algebras to any $K$-semimodule $M$.
\begin{lemma}
Let $M$ be a $K$-semimodule over a commutative semiring $K$.  
The set of {\em left endomorphisms} of $M$, denoted 
$\Endl(M)$, is the set of all $K$-linear maps $M \rightarrow M$ endowed
with the following operations.
Addition and scalar multiplication are defined pointwise.  Let
$f,g$ be $K$-linear maps $M \rightarrow M$.  Define
$$fg(a) = f(g(a)).$$
Similarly, let $\Endr(M)$ be the set of all $K$-linear maps
$M \rightarrow M$ endowed with pointwise addition and scalar multiplication,
and define multiplication by
$$(a)fg = ((a)f)g.$$
Then $\Endl(M)$ and $\Endr(M)$ are $K$-algebras.
\end{lemma}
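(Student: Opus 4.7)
The plan is to verify the algebra axioms directly from the definition, since everything is built out of pointwise operations and composition, whose properties carry over from $M$. I will check the $K$-semimodule structure, $K$-bilinearity of composition, associativity, and the existence of a unit map satisfying the centrality requirement.

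First I would show that $\Endl(M)$ is a $K$-semimodule. Pointwise addition of $K$-linear maps is again $K$-linear, and pointwise $0$ is the identity for addition, so $(\Endl(M),+,0)$ is a commutative monoid inherited from $(M,+,0)$. The scalar action $(k\cdot f)(m) := kf(m)$ lands in $\Endl(M)$ because, using commutativity of $K$ and $K$-linearity of $f$, $(k\cdot f)(k'm) = k\,f(k'm) = kk'f(m) = k'k\,f(m) = k'(k\cdot f)(m)$; the semimodule axioms follow from the corresponding axioms in $M$. This is essentially the argument of Lemma \ref{lemma:hom} applied with $K$ replaced by $M$.

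Next I would check the algebra axioms for composition. Composition of $K$-linear maps is again $K$-linear, and it is associative because ordinary function composition is associative, which gives the associativity diagram. For $K$-bilinearity: $(f+g)\circ h = f\circ h + g\circ h$ and $(k\cdot f)\circ h = k\cdot(f\circ h)$ are immediate from the definitions; for the other side, $f\circ(g+h) = f\circ g + f\circ h$ and $f\circ(k\cdot h) = k\cdot(f\circ h)$ use that $f$ itself is $K$-linear (so $f(km) = kf(m)$). This verifies that composition descends to a $K$-linear map $\Endl(M) \otimes \Endl(M) \to \Endl(M)$ via the universal property of the tensor product. The unit map is $\eta : K \to \Endl(M)$ defined by $\eta(k)(m) = km$; it is $K$-linear, sends $1_K$ to $1_M$, and a short computation, again using $K$-linearity of arbitrary $f \in \Endl(M)$, gives $\eta(k)\circ f = k\cdot f = f\circ \eta(k)$, so $\eta(K)$ is central.

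For $\Endr(M)$, the only genuine change is that the product is $fg := g\circ f$ rather than $f \circ g$. Associativity and bilinearity are verified by exactly the same calculations (the order of arguments is simply reversed throughout), and the same $\eta$ works as the unit. The main (though still routine) obstacle is bookkeeping the centrality condition $\eta(K)\subseteq Z(\Endl(M))$ and the right-hand $K$-linearity of composition: both rely crucially on the $K$-linearity of the endomorphisms themselves together with the commutativity of $K$, and it is worth being explicit that without these two hypotheses one could not slide scalars through the second argument of composition. Everything else is a transcription of familiar facts about endomorphism rings to the semimodule setting.
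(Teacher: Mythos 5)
Your proof is correct and supplies exactly the routine verification that the paper compresses into the single word ``Calculation''; the points you isolate --- commutativity of $K$ for the scalar action on $\Endl(M)$ (as in Lemma~\ref{lemma:hom}), and $K$-linearity of the endomorphisms themselves for bilinearity of composition in its second argument and for centrality of $\eta(K)$ --- are precisely the ones that need checking. There is no substantive difference in approach from the paper.
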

\begin{proof}
Calculation.
\end{proof}
\begin{rmk}
The distinction between $\Endl(M)$ and $\Endr(M)$ allows us to define
automata which read input words from right to left, and automata which read
input words from left to right.
\end{rmk}
\section{$K$-algebras and Automata}
In Example \ref{example:classical}, we defined a $K$-algebra on the
set of polynomials over the noncommuting variables $\{x,y\}$.  We can also
think of elements of this algebra as finite sums of words over the alphabet
$\{x,y\}$. In this section, we generalize this idea and use the actions of
$K$-algebras on $K$-semimodules to define transitions of automata, and list 
several analogs between algebraic constructions and constructions on automata.
\begin{definition}
Let $A$ be a $K$-algebra and $M$ be a $K$-semimodule.  A {\em left action} 
of $A$ on $M$ is a $K$-linear map $A \otimes M \rightarrow M$, denoted $\triangleright$, 
satisfying $$(aa') \triangleright m = a \triangleright (a' \triangleright m)$$
$$1 \triangleright m = m$$ for all $a,a' \in A, m \in M$.
\end{definition}
\noindent Right actions are defined analogously as $K$-linear maps $\triangleleft: M \otimes 
A \rightarrow M$.  To define an automaton, we also need a start state and an
observation function.
\begin{definition}
A {\em left $K$-linear automaton} $\mathcal{A} = (M,A,s,\triangleright, \Omega)$ 
consists of the following:
\begin{enumerate}
\item A $K$-algebra $A$, a $K$-semimodule $M$, and a left action
$\triangleright$ of $A$ on $M$,
\item An element $s \in M$, called the {\em start vector},
\item A $K$-linear map $\Omega: M \rightarrow K$, called the {\em observation
function}.
\end{enumerate}
\end{definition}
\begin{rmk}
Equivalently, we could have defined a $K$-linear {\em start function}
$$\alpha: K \rightarrow M$$
and set $s = \alpha(1)$.  This is useful in Section
\ref{section:determinize} below, but can add unnecessary symbols to
proofs.  We use both variants, depending on the situation.
\end{rmk}
Automata are ``pointed observable representation
objects'' of a $K$-algebra $A$. Right automata are defined similarly using a right action $\triangleleft$. 
In the sequel, we give only ``one side'' of a theorem or definition
involving automata; the other follows mutatis mutandis.  Intuitively, right automata read inputs
from left to right, and left automata read inputs from right to left (see
Example \ref{example:reversal} below).
\begin{example}
\label{example:algebraicautomaton}
Consider the following classical automaton:
$$
 \xymatrix{\ar[r]
& *++[o][F]{s_1}\ar[r]_{x} \ar@(ur,ul)^{x}
& *++[o][F=]{s_2}\ar@/_1.5pc/[l]^{y}
& }
$$
We provide a translation of this automaton into the framework of $K$-algebra
representations.

Let $K$ be the two-element idempotent semiring.  Let $M$ be the free
$K$-semimodule on the set $\{s_1,s_2\}$, and let $P$ be defined as in Example 
\ref{example:classical}.  Define a right action of the generators of $P$ 
(as a $K$-algebra) on $M$ as follows: $$
\left[
\begin{array}{lr}
k_1 & k_2 \\ \end{array}
\right] 
\triangleleft x =
\left[
\begin{array}{lr}
k_1 & k_2 \\ \end{array}
\right]
\left[
\begin{array}{lr}
1 & 1 \\
0 & 0 \\ \end{array}
\right]    
$$
$$
\left[
\begin{array}{lr}
k_1 & k_2 \\ \end{array}
\right] 
\triangleleft y =
\left[
\begin{array}{lr}
k_1 & k_2 \\ \end{array}
\right]
\left[
\begin{array}{lr}
0 & 0 \\
1 & 0 \\ \end{array}
\right]    
$$
and extend algebraically to an action of $P$ on $M$.
The start vector is
$$
\left[
\begin{array}{lr}
1 & 0 \\ \end{array}
\right]
$$
and the observation function is
$$
\Omega
\left(
\left[
\begin{array}{lr}
k_1 & k_2 \\ \end{array}
\right]
\right) = 
\left[
\begin{array}{lr}
k_1 & k_2 \\ \end{array}
\right]
\left[
\begin{array}{lr}
0 \\
1 \\ \end{array}
\right].
$$
\end{example}
Automata determine elements of $\Hom(A,K)$, as in \cite{bib:bar}.

\begin{definition}
Let $\mathcal{A} = (M,A,s,\triangleright, \Omega)$ be a left
$K$-linear automaton. The {\em language accepted} by $\mathcal{A}$ is the
function $\rho_{\mathcal{A}}: A \rightarrow K$ such that 
$$\rho_{\mathcal{A}}(a) = \Omega(a \triangleright s).$$
\end{definition}
\begin{lemma}
The function $\rho_{\mathcal{A}}$ is an element of $\Hom(A,K)$.
\end{lemma}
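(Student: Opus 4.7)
The plan is to show that $\rho_{\mathcal{A}}$ is a composition of two $K$-linear maps, and is therefore itself $K$-linear, which places it in $\Hom(A,K)$ as required.

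First I would exhibit the map $\sigma_s: A \to M$ defined by $\sigma_s(a) = a \triangleright s$ as a $K$-linear map. Since the left action $\triangleright$ is, by definition, a $K$-linear map $A \otimes M \to M$, fixing the second tensor factor via the $K$-linear map $K \to M$, $1 \mapsto s$ (the start function variant mentioned in the remark), yields the composite
\[
A \;\cong\; A \otimes K \;\xrightarrow{\;1_A \otimes \alpha\;}\; A \otimes M \;\xrightarrow{\;\triangleright\;}\; M,
\]
which sends $a \mapsto a \triangleright s$. Each arrow is $K$-linear (using Lemma \ref{lemma:tensorproperties}.3 for the isomorphism and \ref{lemma:tensorproperties}.4 for $1_A \otimes \alpha$), so $\sigma_s$ is $K$-linear.

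Then I would observe that $\rho_{\mathcal{A}} = \Omega \circ \sigma_s$. Since $\Omega: M \to K$ is $K$-linear by the definition of a $K$-linear automaton, and composition of $K$-linear maps is $K$-linear, $\rho_{\mathcal{A}}$ is a $K$-linear map $A \to K$, hence an element of $\Hom(A,K)$.

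There is no real obstacle here: the content of the lemma is essentially just unpacking the definitions and invoking the bilinearity built into the notion of an action. The only minor point worth being careful about is distinguishing the algebra structure on $A$ (which plays no role in the $K$-linearity statement) from its underlying $K$-semimodule structure (which is all we need). If one prefers to avoid introducing the start function $\alpha$, one can instead verify $\rho_{\mathcal{A}}(a+a') = \rho_{\mathcal{A}}(a) + \rho_{\mathcal{A}}(a')$ and $\rho_{\mathcal{A}}(ka) = k\rho_{\mathcal{A}}(a)$ directly by a single line of computation using the bilinearity of $\triangleright$ and the $K$-linearity of $\Omega$.
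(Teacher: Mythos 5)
Your proposal is correct and is essentially an expanded version of the paper's own one-line proof, which simply observes that the claim is immediate because $\triangleright$ and $\Omega$ are $K$-linear. The explicit factorization through $A \cong A \otimes K \to A \otimes M \to M \to K$ is a careful way of spelling out the same idea.
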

\begin{proof}
Immediate since $\triangleright$ and $\Omega$ are $K$-linear maps.
\end{proof}
\begin{definition}
Let $\mathcal{A}$ and $\mathcal{B}$ be left $K$-linear automata.
If $\rho_{\mathcal{A}} = \rho_{\mathcal{B}}$, then $\mathcal{A}$
and $\mathcal{B}$ are said to be {\em equivalent}.
\end{definition}
\noindent Functions between automata which preserve the language 
accepted are central to the theory of automata; such functions have
$K$-algebraic analogs.
\begin{definition}
Let $\mathcal{A} = (M,A,s_{\mathcal{A}},\triangleright_{\mathcal{A}},
\Omega_{\mathcal{A}})$ and ${\mathcal{B}} =
(N,A,s_{\mathcal{B}},\triangleright_{\mathcal{B}}, \Omega_{\mathcal{B}})$ be
left $K$-linear automata. An {\em $K$-linear automaton
morphism} from $\mathcal{A}$ to $\mathcal{B}$ is a map $\phi: M \rightarrow N$ such that
\begin{equation}
\phi(s_{\mathcal{A}}) = s_{\mathcal{B}}
\label{eqno1}
\end{equation}
\begin{equation}
\phi(a \triangleright_{\mathcal{A}} m) = a \triangleright_{\mathcal{B}} \phi(m)
\label{eqno2}
\end{equation}
\begin{equation}
\Omega_{\mathcal{A}}(m) = \Omega_{\mathcal{B}}(\phi(m))
\label{eqno3}
\end{equation}
\end{definition}
\noindent for all $m \in M$ and $a \in A$.
\begin{rmk}
Let $V$ and $W$ be $R$-modules.  In the theory of $R$-algebras, an $R$-linear
map $f: V \rightarrow W$ which satisfies (\ref{eqno2}) is known as a {\em linear intertwiner}.
\end{rmk}
\begin{rmk} In the theory of automata, functions formally similar to automaton morphisms have been called
{\em linear sequential morphisms} \cite{bib:aac}, {\em relational simulations}
\cite{bib:brwa}, {\em boolean bisimulations} \cite{bib:bbv}, and {\it
disimulations} \cite{bib:apgka}. Disimulations are based on the {\em
bisimulation lemma} of Kleene algebra \cite{bib:ko94}.
\end{rmk}
The following theorem, or a minor variant, is proven in most of the references
mentioned in the above remark.  
\begin{theorem}
\label{theorem:samelanguage}
Let $\mathcal{A} = (M,A,s_{\mathcal{A}},\triangleright_{\mathcal{A}},
\Omega_{\mathcal{A}})$ and $\mathcal{B} =
(N,A,s_{\mathcal{B}},\triangleright_{\mathcal{B}}, \Omega_{\mathcal{B}})$ be
left $K$-linear automata, and let $\phi:\mathcal{A} \rightarrow
\mathcal{B}$ be a $K$-linear automaton morphism.  Then 
$\mathcal{A}$ and $\mathcal{B}$ are equivalent.
\end{theorem}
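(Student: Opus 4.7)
The plan is to show $\rho_{\mathcal{A}}(a) = \rho_{\mathcal{B}}(a)$ pointwise for each $a \in A$ by chaining the three defining properties (\ref{eqno1}), (\ref{eqno2}), (\ref{eqno3}) of a $K$-linear automaton morphism. Specifically, starting from $\rho_{\mathcal{A}}(a) = \Omega_{\mathcal{A}}(a \triangleright_{\mathcal{A}} s_{\mathcal{A}})$ by definition, I would apply (\ref{eqno3}) with $m = a \triangleright_{\mathcal{A}} s_{\mathcal{A}}$ to rewrite this as $\Omega_{\mathcal{B}}(\phi(a \triangleright_{\mathcal{A}} s_{\mathcal{A}}))$, then invoke (\ref{eqno2}) to push $\phi$ past the action and obtain $\Omega_{\mathcal{B}}(a \triangleright_{\mathcal{B}} \phi(s_{\mathcal{A}}))$, and finally use (\ref{eqno1}) to replace $\phi(s_{\mathcal{A}})$ with $s_{\mathcal{B}}$, giving $\Omega_{\mathcal{B}}(a \triangleright_{\mathcal{B}} s_{\mathcal{B}}) = \rho_{\mathcal{B}}(a)$.

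The argument is a single chain of equalities and requires no induction or auxiliary lemmas, since the morphism axioms are stated for all $m \in M$ and all $a \in A$ directly; in particular, $a$ ranges over the entire algebra $A$, not just a generating set, so I never need to bootstrap the statement from generators. There is no real obstacle: the only thing to be careful about is matching the subscripts $\mathcal{A}$ versus $\mathcal{B}$ on the three distinct pieces of structure (start vector, action, observation) and applying each axiom at the correct stage of the chain. Since $a \in A$ was arbitrary, equality of $\rho_{\mathcal{A}}$ and $\rho_{\mathcal{B}}$ as elements of $\Hom(A,K)$ follows, and hence $\mathcal{A}$ and $\mathcal{B}$ are equivalent.
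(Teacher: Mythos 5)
Your proof is correct and is essentially identical to the paper's: the same three-step chain of equalities, applying the observation-compatibility axiom, then the intertwining axiom, then the start-vector axiom, for an arbitrary $a \in A$. Nothing further is needed.
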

\begin{proof}
For any $a \in A$,
\begin{align*}
\Omega_{\mathcal{A}}(a \triangleright_{\mathcal{A}} s_{\mathcal{A}}) 
&=\Omega_{\mathcal{B}}(\phi(a \triangleright_{\mathcal{A}} s_{\mathcal{A}})) \\ 
&= \Omega_{\mathcal{B}}(a\triangleright_{\mathcal{B}} \phi(s_{\mathcal{A}})) \\ 
&= \Omega_{\mathcal{B}}(a \triangleright_{\mathcal{B}} s_{\mathcal{B}}).
\end{align*}
\end{proof}
\noindent A simple calculation proves the following lemma.
\begin{lemma}
Let $\mathcal{A},\mathcal{B},\mathcal{C}$ be left $K$-linear automata and
$\phi:\mathcal{A} \rightarrow \mathcal{B}$, $\phi':\mathcal{B} \rightarrow
\mathcal{C}$ be automaton morphisms. Then $\phi' \circ \phi: \mathcal{A}
\rightarrow \mathcal{C}$ is an automaton morphism.
\end{lemma}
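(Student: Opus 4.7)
The plan is to verify the three defining equations (\ref{eqno1}), (\ref{eqno2}), (\ref{eqno3}) for the composite map $\phi' \circ \phi : M \to P$ (where $P$ is the underlying semimodule of $\mathcal{C}$), together with the observation that the composite of two $K$-linear maps is $K$-linear. Since each of $\phi$ and $\phi'$ is already a $K$-semimodule homomorphism, the $K$-linearity of $\phi' \circ \phi$ is immediate and needs no separate argument.

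First I would check the start-vector condition: apply $\phi$ to $s_{\mathcal{A}}$ to land on $s_{\mathcal{B}}$ by (\ref{eqno1}) for $\phi$, then apply $\phi'$ and use (\ref{eqno1}) for $\phi'$ to obtain $s_{\mathcal{C}}$. Next I would verify the intertwining condition (\ref{eqno2}): starting from $(\phi' \circ \phi)(a \triangleright_{\mathcal{A}} m)$, peel off $\phi$ using its intertwining property to get $\phi'(a \triangleright_{\mathcal{B}} \phi(m))$, then peel off $\phi'$ similarly to land at $a \triangleright_{\mathcal{C}} (\phi' \circ \phi)(m)$. Finally, for the observation condition (\ref{eqno3}), chain the two equalities $\Omega_{\mathcal{A}}(m) = \Omega_{\mathcal{B}}(\phi(m)) = \Omega_{\mathcal{C}}(\phi'(\phi(m)))$.

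There is no real obstacle here; the three conditions each chain together cleanly because they are of the form ``apply a morphism'' on both sides. The only mild subtlety to flag is that all three automata must be representation objects of the \emph{same} $K$-algebra $A$ for the intertwining step to make sense, which is built into the hypothesis that $\phi$ and $\phi'$ are automaton morphisms in the sense of the preceding definition. Given that, the lemma reduces to a one-line calculation for each axiom and can be dispatched briefly.
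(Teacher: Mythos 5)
Your proposal is correct and is precisely the ``simple calculation'' the paper alludes to (the paper gives no explicit proof, only the remark that a simple calculation suffices): chaining each of the three defining conditions (\ref{eqno1})--(\ref{eqno3}) through $\phi$ and then $\phi'$. Nothing further is needed.
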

Furthermore, for a left $K$-linear automaton $\mathcal{A}$, the identity map of
the underlying $K$-semimodule of $\mathcal{A}$ is an automaton morphism.  We
therefore have the following.
\begin{lemma}
For a given commutative semiring $K$, the collection of $K$-linear automata and
automaton morphisms forms a category.
\end{lemma}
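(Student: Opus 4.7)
The plan is to verify the four requirements for a category: a class of objects, a class of morphisms between any two objects, an associative composition law, and identity morphisms satisfying the left and right unit laws. The objects (left $K$-linear automata) and morphisms ($K$-linear automaton morphisms) are already defined, and two of the ingredients have effectively been established: the immediately preceding lemma shows that the composite of two automaton morphisms is again an automaton morphism, and the paragraph following it observes that the identity map of the underlying $K$-semimodule of any automaton $\mathcal{A}$ is an automaton morphism $1_{\mathcal{A}}:\mathcal{A}\rightarrow\mathcal{A}$. So the work that remains is really to package these observations together with the verification of associativity and the unit laws.

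First I would point out that since an automaton morphism $\phi:\mathcal{A}\rightarrow\mathcal{B}$ is in particular a $K$-linear map between the underlying $K$-semimodules, composition of automaton morphisms is, at the set-theoretic level, just the usual composition of $K$-linear maps. I would then note that \textbf{K}-\textbf{Mod} is already a category, so composition of the underlying $K$-linear maps is associative; this transfers immediately to composition of automaton morphisms because the extra conditions (\ref{eqno1})--(\ref{eqno3}) do not affect the underlying function. Likewise, for any automaton morphism $\phi:\mathcal{A}\rightarrow\mathcal{B}$, the identities $\phi\circ 1_{\mathcal{A}}=\phi=1_{\mathcal{B}}\circ\phi$ hold at the level of underlying functions, and hence as identities of automaton morphisms.

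The only thing that genuinely needs checking is that $1_{\mathcal{A}}$ really does satisfy equations (\ref{eqno1})--(\ref{eqno3}). All three are immediate: $1_{\mathcal{A}}(s_{\mathcal{A}})=s_{\mathcal{A}}$ gives (\ref{eqno1}); $1_{\mathcal{A}}(a\triangleright_{\mathcal{A}} m)=a\triangleright_{\mathcal{A}} m = a\triangleright_{\mathcal{A}} 1_{\mathcal{A}}(m)$ gives (\ref{eqno2}); and $\Omega_{\mathcal{A}}(m)=\Omega_{\mathcal{A}}(1_{\mathcal{A}}(m))$ gives (\ref{eqno3}). With this and the preceding lemma, the category axioms all reduce to facts already known in \textbf{K}-\textbf{Mod}.

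I do not expect any genuine obstacle here: the proof is a straightforward bookkeeping argument that inherits the category structure from \textbf{K}-\textbf{Mod} via the forgetful assignment sending an automaton to its underlying $K$-semimodule and an automaton morphism to its underlying $K$-linear map. The only mildly substantive point is noticing that this assignment is faithful on morphisms, so that equations between automaton morphisms can be checked on the underlying $K$-linear maps. If I wanted to be slightly more conceptual, I could phrase the whole argument as: automaton morphisms form a subcollection of the hom-sets of \textbf{K}-\textbf{Mod} that is closed under composition and contains all identities, which is exactly what is needed to form a (non-full) subcategory structure on the class of automata.
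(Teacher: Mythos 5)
Your proposal is correct and follows essentially the same route as the paper, which gives no explicit proof but relies on exactly the same two ingredients: the preceding lemma (closure of automaton morphisms under composition) and the observation that the identity map of the underlying $K$-semimodule is an automaton morphism, with associativity and the unit laws inherited from composition of the underlying $K$-linear maps. Your additional remarks (checking equations (\ref{eqno1})--(\ref{eqno3}) for the identity, and the faithfulness of the forgetful assignment to \Kmod) just make explicit what the paper leaves implicit.
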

Let $A$ be a $K$-algebra.  Elements of $\Hom(A,K)$ can be added 
and scaled by $K$, since $\Hom(A,K)$ is a $K$-semimodule by Lemma
\ref{lemma:hom}. Given automata $\mathcal{A}$ and $\mathcal{B}$, there is an automaton accepting
$\rho_{\mathcal{A}} + \rho_{\mathcal{B}}$, and given $k \in K$, there is an
automaton accepting $k\rho_{\mathcal{A}}$.
\begin{definition}
Let $\mathcal{A} =
(M,A,s_{\mathcal{A}},\triangleright_{\mathcal{A}},\Omega_{\mathcal{A}})$ and 
$\mathcal{B} = (N,A,s_{\mathcal{B}},\triangleright_{\mathcal{B}},
\Omega_{\mathcal{B}})$ be left $K$-linear automata. The {\em direct
sum} of $\mathcal{A}$ and $\mathcal{B}$ is the left $K$-linear automaton 
$\mathcal{A} \oplus \mathcal{B} = (M \oplus N, A,
(s_{\mathcal{A}},s_{\mathcal{B}}), \triangleright_{\mathcal{A} \oplus
\mathcal{B}}, \Omega_{\mathcal{A}} \oplus \Omega_{\mathcal{B}})$, where
$$\triangleright_{\mathcal{A} \oplus \mathcal{B}}: A \otimes(M \oplus N) 
\rightarrow M \oplus N,$$ 
$$\triangleright_{\mathcal{A} \oplus \mathcal{B}}(a \otimes (m,n)) = 
((a \triangleright_{\mathcal{A}} m),(a \triangleright_{\mathcal{B}} n))$$ and
$$\Omega_{\mathcal{A} \oplus \mathcal{B}}: M \oplus N \rightarrow K,$$
$$\Omega_{\mathcal{A} \oplus \mathcal{B}}(m,n) =
\Omega_{\mathcal{A}}(m)+\Omega_{\mathcal{B}}(n).$$
\end{definition}
\noindent The verification that $\triangleright_{\mathcal{A} \oplus \mathcal{B}}$ is 
an action of $A$ on $M \oplus N$ is straightforward.
\begin{theorem}
Let $\mathcal{A}= (M,A,s_{\mathcal{A}},\triangleright_{\mathcal{A}},
\Omega_{\mathcal{A}})$ and $(N,A,s_{\mathcal{B}},\triangleright_{\mathcal{B}},
\Omega_{\mathcal{B}})$ be left $K$-linear automata. Then
$\rho_{\mathcal{A} \oplus \mathcal{B}}(a) = \rho_{\mathcal{A}}(a) +
\rho_{\mathcal{B}}(a)$ for all $a \in A$.
\end{theorem}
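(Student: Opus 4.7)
The proof is essentially a direct unwinding of the definitions of $\triangleright_{\mathcal{A} \oplus \mathcal{B}}$ and $\Omega_{\mathcal{A} \oplus \mathcal{B}}$, applied to the start vector of $\mathcal{A} \oplus \mathcal{B}$. First I would write
\[
\rho_{\mathcal{A} \oplus \mathcal{B}}(a) \;=\; \Omega_{\mathcal{A} \oplus \mathcal{B}}\bigl(a \triangleright_{\mathcal{A} \oplus \mathcal{B}} (s_{\mathcal{A}},s_{\mathcal{B}})\bigr),
\]
then substitute the componentwise formula for the action to rewrite the argument as $(a \triangleright_{\mathcal{A}} s_{\mathcal{A}},\, a \triangleright_{\mathcal{B}} s_{\mathcal{B}})$, and finally apply the definition of $\Omega_{\mathcal{A} \oplus \mathcal{B}}$ as the sum of coordinates to obtain $\Omega_{\mathcal{A}}(a \triangleright_{\mathcal{A}} s_{\mathcal{A}}) + \Omega_{\mathcal{B}}(a \triangleright_{\mathcal{B}} s_{\mathcal{B}}) = \rho_{\mathcal{A}}(a) + \rho_{\mathcal{B}}(a)$.

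The only genuine subtlety is that the action $\triangleright_{\mathcal{A} \oplus \mathcal{B}}$ is specified only on pure tensors $a \otimes (m,n)$, whereas a general element of $A \otimes (M \oplus N)$ is a finite sum of such tensors; however, since we are evaluating on the single pure tensor $a \otimes (s_{\mathcal{A}},s_{\mathcal{B}})$, the pointwise description suffices without any appeal to $K$-linear extension. Likewise, I would not need to verify here that $\triangleright_{\mathcal{A} \oplus \mathcal{B}}$ satisfies the action axioms, since the theorem statement presupposes that $\mathcal{A} \oplus \mathcal{B}$ is a well-defined left $K$-linear automaton (and the paragraph preceding the theorem explicitly remarks that this verification is straightforward).

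Since there is no real obstacle, I would present the argument as a short chain of equalities in an \texttt{align*} environment, mirroring the style of the proof of Theorem \ref{theorem:samelanguage}, with each step annotated by the definition it invokes (definition of $\rho$, definition of $\triangleright_{\mathcal{A} \oplus \mathcal{B}}$, definition of $\Omega_{\mathcal{A} \oplus \mathcal{B}}$, definition of $\rho_{\mathcal{A}}$ and $\rho_{\mathcal{B}}$). The hardest part is purely notational: keeping the subscripts on the various actions and observation maps straight so that the reader can follow which structure is being used at each step.
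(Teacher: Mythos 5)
Your proposal is correct and matches the paper's proof exactly: the paper gives precisely this four-step chain of equalities (definition of $\rho_{\mathcal{A}\oplus\mathcal{B}}$, componentwise action, additivity of $\Omega_{\mathcal{A}\oplus\mathcal{B}}$, and the definitions of $\rho_{\mathcal{A}}$ and $\rho_{\mathcal{B}}$) in an \texttt{align*} display. Your side remarks about pure tensors and the well-definedness of the direct-sum action being presupposed are consistent with how the paper handles these points (the verification is deferred to the remark preceding the theorem).
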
  
\begin{proof}
For any $a \in A$,
\begin{align*}
\rho_{\mathcal{A} \oplus \mathcal{B}}(a) &= \Omega_{\mathcal{A} \oplus
\mathcal{B}}(a \triangleright_{\mathcal{A} \oplus \mathcal{B}}
(s_{\mathcal{A}},s_{\mathcal{B}})) \\
&= \Omega_{\mathcal{A} \oplus \mathcal{B}} (a \triangleright_{\mathcal{A}}
s_{\mathcal{A}}, a \triangleright_{\mathcal{B}} s_{\mathcal{B}})\\ 
&=\Omega_{\mathcal{A}}(a \triangleright_{\mathcal{A}}
(s_{\mathcal{A}})) + \Omega_{\mathcal{B}}(a \triangleright_{\mathcal{B}}
(s_{\mathcal{B}})) \\ 
&=\rho_{\mathcal{A}}(a) + \rho_{\mathcal{B}}(a).
\end{align*}
\end{proof}
\begin{theorem}
Let $\mathcal{A}= (M,A,s,\triangleright, \Omega)$ be a left $K$-linear
automaton, and let $k \in K$.  Then $k\rho_{\mathcal{A}} = \rho_{\mathcal{A}'}$, 
where $\mathcal{A}'= (M,A,ks,\triangleright,\Omega)$.
\end{theorem}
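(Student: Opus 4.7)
The claim is a short computation that just chains $K$-linearity of the action with $K$-linearity of the observation map, so the plan is to unfold the definition of $\rho_{\mathcal{A}'}$ and push the scalar $k$ out past $\triangleright$ and $\Omega$. Explicitly, for any $a \in A$,
\begin{align*}
\rho_{\mathcal{A}'}(a) &= \Omega(a \triangleright ks) \\
&= \Omega(k(a \triangleright s)) \\
&= k\,\Omega(a \triangleright s) \\
&= k\,\rho_{\mathcal{A}}(a).
\end{align*}

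The first equality is just the definition of the language accepted by $\mathcal{A}'$, whose start vector is $ks$. The second step is the one that needs a word of justification: because the action is a $K$-linear map $A \otimes M \to M$, and $a \otimes ks = k(a \otimes s)$ inside $A \otimes M$, we get $a \triangleright (ks) = k(a \triangleright s)$. The third step uses $K$-linearity of $\Omega$, and the last is the definition of $\rho_{\mathcal{A}}$.

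There is no real obstacle here; everything reduces to the fact that $\triangleright$ and $\Omega$ are morphisms in \Kmod. The only mild subtlety is noting that the scalar $k$ can be moved across $\triangleright$ via the identification $a \otimes km = k(a \otimes m)$ in the tensor product, which was established in Section 4. Since $k \in K$ and $a \in A$ were arbitrary, we conclude $k\rho_{\mathcal{A}} = \rho_{\mathcal{A}'}$ as elements of $\Hom(A,K)$.
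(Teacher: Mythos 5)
Your proof is correct and follows essentially the same route as the paper's: unfold $\rho_{\mathcal{A}'}(a) = \Omega(a \triangleright ks)$ and use $K$-linearity of $\triangleright$ and $\Omega$ to extract the scalar $k$. The paper compresses the two linearity steps into one phrase ``by linearity,'' whereas you justify the passage $a \triangleright ks = k(a \triangleright s)$ via the tensor product identification explicitly, which is a fine (and slightly more careful) presentation of the same argument.
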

\begin{proof}
For any $a \in A$, $\rho_{\mathcal{A}'} = \Omega(a \triangleright ks) = k
\Omega(a \triangleright s) = k\rho_{\mathcal{A}}$ by linearity. 
\end{proof}
Algebra maps can be used to translate the input of an automaton.
\begin{definition}
Let $A,A'$ be $K$-algebras and $f: A \rightarrow A'$ a $K$-algebra map.  Suppose $A'$ acts on a
$K$-semimodule $M$.  Then $A$ also acts on $M$ according to the formula
$$a \triangleright m = f(a) \triangleright m$$
for $a \in A, m \in M.$ This is known as the {\it pullback} of the action of $A'$.
\end{definition}
\noindent Automata theorists will recognize pullbacks as the main ingredient in the proof 
that regular languages are closed under inverse homomorphisms.

Finally, we provide an example in which we reverse certain $K$-linear automata
using dual $K$-semimodules.
\begin{example}
\label{example:reversal}
Let $\mathcal{A} = (M,A,s,\triangleleft, \Omega)$ be a right $K$-linear
automaton, and suppose that $M$ is a free $K$-semimodule on a finite set $X$ and $A$ is the
free $K$-algebra on a finite $\Sigma$.  Then the left $K$-linear automaton
$\mathcal{B} =(\Hom(M,K),A, \Omega, \triangleright, \alpha^*)$, where
$$
a \triangleright f(m) = f(m \triangleleft a)
$$ 
and 
$$
\alpha^*(m) =  m \cdot s^{\rm T}
$$ 
satisfies
$$
\rho_{\mathcal{A}}(w) = \rho_{\mathcal{B}}(w^{\rm R})
$$ 
for all $w \in \Sigma^*$, where $w^{\rm R}$ is the reverse of a word $w$.
That $A \triangleright \Hom(M,K)$ is an action is an application of the
standard fact that actions on (semi)modules ``change sides'' when the 
modules are dualized.  See, for example, \cite{bib:racom}. 

To prove the claim, let $w = x_1 x_2 \cdots x_n$ with $x_i \in
\Sigma$. For some $k \in K, \rho_{\mathcal{A}}(w)= k.$  Since $M$ is a free
$K$-module, the action of each $x \in \Sigma$ on $M$ is given by right multiplication by a 
$|X| \times |X|$ matrix $M_x$ over $K$, and $\Omega(m) = m \cdot v$ for some
$|X| \times 1$ matrix $v$.
By definition,
$$\Omega(s \triangleright x_1 x_2 \cdots x_n) = s\cdot M_{x_1}M_{x_2}\cdots
M_{x_n} \cdot v = k.$$
Taking the transpose of both sides of this equation yields
$\rho_{\mathcal{B}}(w^{\rm R}) = k^{\rm T} = k$, with the slight
abuse of notation $v^{\rm T} = \Omega$.  Note that the
familiar transpose law from linear algebra, $(AB)^{\rm T} = B^{\rm T}A^{\rm T}$,
is valid for matrices over a commutative semiring.
\end{example}
\section{$K$-coalgebras and Formal Languages}
\label{section:languages}
Let $C$ be a $K$-coalgebra.  By Lemma \ref{lemma:hom}, $\Hom(C,K)$ is a
$K$-semimodule under the operations of pointwise addition and scalar multiplication.  
It is a standard fact that the coalgebra structure of $C$ defines an algebra
structure on $\Hom(C,K)$. 
\begin{definition} Let $(C,\Delta,\epsilon)$ be a $K$-coalgebra and $f,g \in
\Hom(C,K)$.  The {\it convolution product} of $f$ and $g$, denoted $f*g$,
is the element of $\Hom(C,K)$ defined by 
$$f*g = \mu_K \circ (f \otimes g) \circ \Delta.$$
Here $\mu_K$ denotes multiplication in $K$.
\end{definition}
\begin{lemma}
Let $(C,\Delta,\epsilon)$ be a $K$-coalgebra.  There is a $K$-algebra
structure on $\Hom(C,K)$ with multiplication given
by the convolution product and unit $$\eta: K \rightarrow C$$
$$\eta(k) = k \epsilon.$$
In particular, the multiplicative identity is $\epsilon.$
\end{lemma}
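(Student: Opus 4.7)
The plan is to verify the $K$-algebra axioms for $(\Hom(C,K), *, \eta)$ by reducing each axiom to a corresponding coalgebra axiom of $C$, composed with the algebra axioms of $K$ itself. Concretely, I would establish first that $*$ is $K$-bilinear (hence descends to a $K$-linear map $\Hom(C,K) \otimes \Hom(C,K) \to \Hom(C,K)$); this is immediate from the fact that $f \otimes g$ is $K$-bilinear in $f,g$ and both $\mu_K$ and $\Delta$ are $K$-linear. Then I would establish, in order: associativity of $*$, the identity property of $\epsilon$, and the unit-homomorphism/centrality properties of $\eta$.

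For associativity, I would use Sweedler notation $\Delta(c) = \sum c_{(1)} \otimes c_{(2)}$. Unwinding the definition gives
$$(f * g) * h\,(c) = \sum f((c_{(1)})_{(1)}) \cdot g((c_{(1)})_{(2)}) \cdot h(c_{(2)}),$$
$$f * (g * h)\,(c) = \sum f(c_{(1)}) \cdot g((c_{(2)})_{(1)}) \cdot h((c_{(2)})_{(2)}).$$
Coassociativity of $\Delta$ (the defining diagram of the $K$-coalgebra $C$) says $(\Delta \otimes 1_C)\circ \Delta = (1_C \otimes \Delta)\circ \Delta$, so both sides collapse to the common expression $\sum f(c_{(1)}) g(c_{(2)}) h(c_{(3)})$, using associativity of $\mu_K$ in $K$. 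Diagrammatically this is just the observation that the convolution product is built from $\mu_K$, $\Delta$, and tensoring; coassociativity of $\Delta$ and associativity of $\mu_K$ together make the associativity pentagon for $*$ commute.

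For the unit, compute $\epsilon * f\,(c) = \sum \epsilon(c_{(1)}) \cdot f(c_{(2)})$. By $K$-linearity of $f$ and the counit diagram $(\epsilon \otimes 1_C)\circ \Delta = \mathrm{id}_C$ (under the canonical isomorphism $K \otimes C \cong C$), this equals $f\bigl(\sum \epsilon(c_{(1)}) c_{(2)}\bigr) = f(c)$. The symmetric calculation, using the other counit axiom, yields $f * \epsilon = f$, so $\epsilon$ is a two-sided identity. For the unit map $\eta$, note $\eta(1_K) = \epsilon$ is the identity just proved, and for $k, k' \in K$ we have $\eta(k) * \eta(k') = (k\epsilon) * (k'\epsilon) = kk'(\epsilon * \epsilon) = kk'\,\epsilon = \eta(kk')$ by $K$-bilinearity of $*$; additivity of $\eta$ is clear. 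Finally, centrality follows from $K$-bilinearity: $\eta(k) * f = k(\epsilon * f) = kf = k(f * \epsilon) = f * \eta(k)$.

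There is no real obstacle here beyond bookkeeping; the content of the lemma is exactly that the diagrammatic coalgebra axioms of $C$ dualize to the diagrammatic algebra axioms of $\Hom(C,K)$. The one place to be slightly careful is keeping track of the canonical isomorphisms $K \otimes K \cong K$ and $K \otimes C \cong C \cong C \otimes K$ when translating between the pointwise Sweedler computation and the diagrammatic statement, but these identifications are exactly the ones that appear in the defining diagrams of Section~\ref{section:acb} applied to the semimodule category, and they have already been set up via Lemma~\ref{lemma:tensorproperties}.
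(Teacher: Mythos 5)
Your proof is correct and follows the same route as the paper: associativity of $*$ is derived from coassociativity of $\Delta$ (your Sweedler-notation computation is the pointwise form of the paper's composite-map identity), and the remaining axioms are routine verifications that the paper simply declares to ``follow immediately from the definitions.'' You have merely supplied the details the paper omits, including the counit and unit checks.
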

\begin{proof}
The operation $*$ is associative because $\Delta$ is coassociative:
$$f * (g * h) = \mu_K(f \otimes (\mu_K (g \otimes h))) \circ ((1 \otimes \Delta)
\circ \Delta)$$
$$ (f * g) * h = \mu_K((\mu_K (f \otimes g)) \otimes h) \circ ((\Delta \otimes
1) \circ \Delta)$$
and coassociativity of $\Delta$ is exactly $((1 \otimes \Delta)
\circ \Delta) = ((\Delta \otimes 1) \circ \Delta)$.  The
rest of the $K$-algebra requirements follow immediately from the definitions.
\end{proof}
The relation between $K$-coalgebras and formal languages is as follows.
Let $P$ be as in Example \ref{example:classical}. Note that an element of
$\Hom(P,K)$ is completely determined by its values on monomials, which we view
as words over $\{x,y\}$. Thus there is a one-to-one correspondence between 
subsets of $\{x,y\}^*$ and elements of $\Hom(P,K)$.

Consider the following comultiplications on $P$, defined on monomials 
and extended linearly:
$$\Delta_1(w) = w \otimes w$$
$$\Delta_2(w) = \sum_{w_1w_2=w} w_1 \otimes w_2.$$
Also consider the comultiplication defined as 
$$\Delta_3(x) = 1\otimes x + x \otimes 1$$
$$\Delta_3(y) = 1 \otimes y + y \otimes 1$$
extended as an algebra map to all of $P$. 
Moreover, we have two $K$-linear maps given by:
$$\epsilon_1(p) = p(1,1)$$
$$\epsilon_2(p) = p(0,0)$$
for all $p \in P$.  Then $(P,\Delta_1,\epsilon_1)$ is a $K$-coalgebra 
(cf. Example \ref{example:two}) as are $(P,\Delta_2,\epsilon_2)$ and
$(P,\Delta_3,\epsilon_2)$.

A simple verification shows that the $K$-algebra on $\Hom(P,K)$ determined
by the $K$-coalgebra $(P,\Delta_1,\epsilon_1)$ corresponds to language
intersection, with the multiplicative identity corresponding to the
language denoted by $(x + y)^*$.  The $K$-coalgebra $(P,\Delta_2,\epsilon_2)$
corresponds to language concatenation with identity $\{\lambda\}$, where
$\lambda$ is the empty word.  Finally, the $K$-coalgebra 
$(P,\Delta_3,\epsilon_2)$ corresponds to the shuffle product of languages, 
again with identity $\{\lambda\}$ (see \cite{bib:ddlawm} and also 
\cite{bib:fqgt}, Proposition 5.1.4). In each case, addition in the $K$-algebra
on $\Hom(P,K)$ corresponds to the union of two languages.

We conclude this section with an example calculation.  Let $f \in \Hom(P,K)$
correspond to the language denoted by $x^*$, and let $g \in \Hom(P,K)$ correspond
to the language denoted by $y^*$.  The following shows that $yx \in f*g$, where
the comultiplication is $\Delta_3$:
\begin{align*}
&\mu_k \circ f \otimes g \circ \Delta_3(xy) = \mu_k \circ f \otimes g (1
\otimes xy + y \otimes x + x \otimes y + xy \otimes 1)\\
&= \mu_K(f(1)\otimes g(xy) + f(y) \otimes g(x) + f(x) \otimes g(y) + f(xy)
\otimes g(1))\\
&= \mu_K(1\otimes 0 + 0 \otimes 0 + 1 \otimes 1 + 0 \otimes 1)\\
&=0 + 0 + 1 + 0\\
&=1.
\end{align*}

\section{Automata, Languages, and $K$-bialgebras }

A $K$-algebra $A$ allows us to define automata which take elements of $A$ as
input. These automata compute elements of $\Hom(A,K)$.  Moreover, a $K$-coalgebra
structure on $A$ defines a multiplication on $\Hom(A,K)$.  We now discuss the
relation between these products on $\Hom(A,K)$ and automata.

We first treat the case in which $A$ is both a $K$-algebra and a $K$-coalgebra, without assuming that $A$ is a $K$-bialgebra.
Let $\mathcal{A} = (M,A,s_{\mathcal{A}},\triangleright_{\mathcal{A}},
\Omega_{\mathcal{A}})$ and 
$\mathcal{B} = (N,A,s_{\mathcal{B}},\triangleright_{\mathcal{B}},\Omega_{\mathcal{B}})$ be 
$K$-linear automata.  Applying the convolution product to $\rho_{\mathcal{A}}$
and $\rho_{\mathcal{B}}$ yields $$\rho_{\mathcal{A}} * \rho_{\mathcal{B}} (a) = 
\mu_K \circ(\sum_i \rho_{\mathcal{A}}(a_{(1)} \triangleright s_{\mathcal{A}})
 \otimes \rho_{\mathcal{B}}(a_{(2)} \triangleright s_{\mathcal{B}})).$$

In words, the convolution product determines a 
formula with comultiplication as a parameter.  Different choices of comultiplication yield different 
products of languages, as discussed in Section \ref{section:languages}.
When the languages are given by automata, we can use this formula to obtain a succinct expression for the 
product of the two languages.

Of course, it would be even better if we could get an automaton accepting
the product of the two languages.  For a $K$-bialgebra, there is an easy way to
construct such an automaton, which relies on a construction from the theory of
bialgebras.

We emphasize that a bialgebra structure is not necessary for an automaton accepting 
$\rho_{\mathcal{A}} * \rho_{\mathcal{B}}$ to 
exist.  Consider $\Delta_2$ and $\Delta_3$ as defined in Section
\ref{section:languages}. They agree on $x$ and $y$, which generate $P$ as an
algebra, so at most one of them can be an algebra map; $\Delta_3$ is an algebra map by definition.  
Therefore $\Delta_2$ is not part of a bialgebra, and so we cannot use the construction to get an automaton 
accepting the concatenation of two languages.  Such an automaton exists, of
course, but it is not given by this construction.

Suppose $B$ is a $K$-bialgebra. The first step is to define an action of $B$
on $M \otimes N$ from actions $B \triangleright_M M$ and $B \triangleright_N N$
(by an action of $B$ on $M$, we mean an action of the underlying algebra of $B$
on $M$). 
\begin{lemma}
Let $B$ be a $K$-bialgebra which acts on $K$-semimodules $M$ and $N$.  Then $B$ acts on $M \otimes N$
according to the diagram
$$
\xymatrix{
B \otimes M \otimes N \ar[r]^{\Delta \otimes 1 \quad}  & B \otimes B \otimes M
\otimes N \ar[r]^{1 \otimes \sigma \otimes 1} & B \otimes M \otimes B \otimes N
\ar[r]^{\quad \quad \triangleright_M \otimes \triangleright_N} & M \otimes N.} 
$$
\end{lemma}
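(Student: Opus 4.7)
The plan is to verify the two action axioms: the unit law $1_B \triangleright (m \otimes n) = m \otimes n$ and the associativity law $(bb') \triangleright (m \otimes n) = b \triangleright (b' \triangleright (m \otimes n))$. By the universal property of $\otimes$ it suffices to check these on pure tensors, and then extend $K$-linearly. Using Sweedler notation $\Delta(b) = \sum b_{(1)} \otimes b_{(2)}$, the composite in the statement sends $b \otimes m \otimes n \mapsto \sum (b_{(1)} \triangleright_M m) \otimes (b_{(2)} \triangleright_N n)$, which is the formula I will work with.

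For the unit law, I invoke the bialgebra axiom $\Delta(1_B) = 1_B \otimes 1_B$, so
\[
1_B \triangleright (m \otimes n) = (1_B \triangleright_M m) \otimes (1_B \triangleright_N n) = m \otimes n,
\]
using the unit axiom for $\triangleright_M$ and $\triangleright_N$ separately. For associativity, I use the crucial bialgebra axiom that $\Delta$ is an algebra map, which in Sweedler form reads $\Delta(bb') = \sum b_{(1)} b'_{(1)} \otimes b_{(2)} b'_{(2)}$. Then
\begin{align*}
(bb') \triangleright (m \otimes n) &= \sum \bigl((b_{(1)} b'_{(1)}) \triangleright_M m\bigr) \otimes \bigl((b_{(2)} b'_{(2)}) \triangleright_N n\bigr) \\
&= \sum \bigl(b_{(1)} \triangleright_M (b'_{(1)} \triangleright_M m)\bigr) \otimes \bigl(b_{(2)} \triangleright_N (b'_{(2)} \triangleright_N n)\bigr) \\
&= b \triangleright (b' \triangleright (m \otimes n)),
\end{align*}
where the middle equality uses associativity of $\triangleright_M$ and $\triangleright_N$ and the final equality repackages the result as the diagonal action.

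If one wishes to avoid Sweedler notation entirely, both identities can be checked by diagram chases: the unit law reduces to the counit/unit triangle of $B$ together with the unit diagrams for $\triangleright_M$ and $\triangleright_N$, while associativity is the commutativity of a larger diagram obtained by pasting the compatibility square relating $\mu_B$ and $\Delta_B$ (with the transposition $\sigma$) on top of the two separate action-associativity squares. The only bookkeeping subtlety, and what I expect to be the main obstacle to writing out cleanly, is keeping track of the several instances of the swap $\sigma$ in the four-fold tensor products; concretely, one must verify that applying $\Delta \otimes 1_{M \otimes N}$ twice and shuffling is the same (up to $\sigma$'s) as applying $(\Delta \otimes \Delta) \circ \Delta$ once, which is exactly where the bialgebra compatibility diagram enters. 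Once that shuffle is justified, the rest is a routine calculation.
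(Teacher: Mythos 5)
Your proof is correct and follows essentially the same route as the paper: both verify the two action axioms on pure tensors via the Sweedler-notation formula $b \triangleright (m \otimes n) = \sum b_{(1)} \triangleright_M m \otimes b_{(2)} \triangleright_N n$, with the associativity step resting on the bialgebra axiom $\Delta(bb') = \sum b_{(1)}b'_{(1)} \otimes b_{(2)}b'_{(2)}$ and the associativity of the individual actions. If anything, you are more explicit than the paper about the unit law (the paper dismisses it as easy, while you correctly isolate $\Delta(1_B) = 1_B \otimes 1_B$ as the needed axiom).
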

\begin{proof}
It is easy to see that the action of $B$ on $M \otimes N$ is a $K$-linear map
such that $1 \triangleright m \otimes n = m \otimes n$.  To see that
$ab \triangleright m \otimes n = a \triangleright(b \triangleright m \otimes
n)$, note that the equational definition of the action is
$$b \triangleright_{M \otimes N} ( m \otimes n) = \sum_i b_{(1)}\triangleright_M m \otimes b_{(2)} \triangleright_N n.$$
We have
\begin{align*}
ab \triangleright m \otimes n &= \sum_i ab_{(1)}\triangleright_M m \otimes
ab_{(2)} \triangleright_N n\\
&= \sum_i a_{(1)}b_{(1)}\triangleright_M m \otimes
a_{(2)}b_{(2)} \triangleright_N n\\ 
&= a \triangleright(b \triangleright m
\otimes n).
\end{align*}
\end{proof}
\begin{definition}
Let $\mathcal{A} = (M,B,s_{\mathcal{A}},\triangleright_{\mathcal{A}},
\Omega_{\mathcal{A}})$ and 
$\mathcal{B}=(N,B,s_{\mathcal{B}},\triangleright_{\mathcal{B}},\Omega_{\mathcal{B}})$ 
be left $K$-linear automata. The {\em tensor product} of $\mathcal{A}$
and $\mathcal{B}$, denoted $\mathcal{A} \otimes \mathcal{B}$, is the automaton
$(M \otimes N,B,s_{\mathcal{A}} \otimes s_{\mathcal{B}},  \triangleright_{M \otimes N}, 
\Omega_{\mathcal{A}} \otimes \Omega_{\mathcal{B}})$.
\end{definition}
\begin{rmk} 
Note that since $K \otimes K \cong K$, $\Omega_M \otimes \Omega_N: M \otimes N \rightarrow K$.
\end{rmk}
\begin{theorem}
Let $\mathcal{A} = (M,B,s_{\mathcal{A}},\triangleright_{\mathcal{A}},
\Omega_{\mathcal{A}})$ and 
$\mathcal{B}=(N,B,s_{\mathcal{B}},\triangleright_{\mathcal{B}},
\Omega_{\mathcal{B}})$ be left $K$-linear automata.  
Then $\rho_{\mathcal{A} \otimes \mathcal{B}} = \rho_{\mathcal{A}} * \rho_{\mathcal{B}}$.
\end{theorem}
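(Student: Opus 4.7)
The plan is to unpack both sides of the desired equation using the definitions and the lemma that describes the action of $B$ on $M \otimes N$, and verify that they coincide after identifying $K \otimes K$ with $K$ via multiplication.

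First I would start with an arbitrary $b \in B$ and expand $\rho_{\mathcal{A} \otimes \mathcal{B}}(b)$ directly from the definition of the language accepted: it equals $(\Omega_{\mathcal{A}} \otimes \Omega_{\mathcal{B}})(b \triangleright_{M \otimes N} (s_{\mathcal{A}} \otimes s_{\mathcal{B}}))$. Next I would apply the preceding lemma, which gives the equational form
$$b \triangleright_{M \otimes N} (s_{\mathcal{A}} \otimes s_{\mathcal{B}}) = \sum_i (b_{(1)} \triangleright_{\mathcal{A}} s_{\mathcal{A}}) \otimes (b_{(2)} \triangleright_{\mathcal{B}} s_{\mathcal{B}}),$$
using the Sweedler-style notation $\Delta(b) = \sum_i b_{(1)} \otimes b_{(2)}$ introduced earlier.

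Then I would apply $\Omega_{\mathcal{A}} \otimes \Omega_{\mathcal{B}}$ term-by-term, using $K$-linearity, to obtain
$$\sum_i \Omega_{\mathcal{A}}(b_{(1)} \triangleright_{\mathcal{A}} s_{\mathcal{A}}) \otimes \Omega_{\mathcal{B}}(b_{(2)} \triangleright_{\mathcal{B}} s_{\mathcal{B}}),$$
and collapse $K \otimes K \cong K$ via $\mu_K$, as flagged in the preceding remark. This rewrites the expression as $\sum_i \rho_{\mathcal{A}}(b_{(1)}) \cdot \rho_{\mathcal{B}}(b_{(2)})$ by the definition of each $\rho$.

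Finally I would observe that this last expression is precisely $\mu_K \circ (\rho_{\mathcal{A}} \otimes \rho_{\mathcal{B}}) \circ \Delta(b)$, which is the definition of the convolution product $(\rho_{\mathcal{A}} * \rho_{\mathcal{B}})(b)$ from Section \ref{section:languages}. Since $b$ was arbitrary, the two elements of $\Hom(B,K)$ agree. This proof is essentially bookkeeping; the only mild subtlety is being careful that the identification $K \otimes K \cong K$ used in the definition of the tensor automaton's observation function is the same identification used implicitly in the definition of $*$, so that no hidden sign or ordering is introduced. Since $K$ is commutative and both identifications send $k \otimes k'$ to $kk'$, this compatibility is immediate, so there is no serious obstacle.
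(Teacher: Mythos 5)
Your proposal is correct and follows essentially the same route as the paper's proof: expand $\rho_{\mathcal{A}\otimes\mathcal{B}}(b)$, apply the equational form $b \triangleright_{M \otimes N}(s_{\mathcal{A}} \otimes s_{\mathcal{B}}) = \sum_i (b_{(1)}\triangleright_{\mathcal{A}} s_{\mathcal{A}}) \otimes (b_{(2)}\triangleright_{\mathcal{B}} s_{\mathcal{B}})$ from the preceding lemma, push $\Omega_{\mathcal{A}}\otimes\Omega_{\mathcal{B}}$ through and collapse $K\otimes K\cong K$ to recognize the convolution product. Your added remark about checking that the two identifications of $K\otimes K$ with $K$ agree is a reasonable bit of extra care that the paper leaves implicit.
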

\begin{proof}
For any $b \in B$,
\begin{align*}
\rho_{\mathcal{A} \otimes \mathcal{B}}(b) &= \Omega_{\mathcal{A} \otimes \mathcal{B}}(b \triangleright_{\mathcal{A} \otimes \mathcal{B}}(s_{\mathcal{A}} \otimes s_{\mathcal{B}}))\\
&= \Omega_{\mathcal{A} \otimes \mathcal{B}}(\sum_i b_{(1)} \triangleright_{\mathcal{A}} s_{\mathcal{A}} \otimes b_{(2)} \triangleright_{\mathcal{B}}
 s_{\mathcal{B}})\\
&= \sum_i \Omega_{\mathcal{A}}(b_{(1)}\triangleright_{\mathcal{A}} s_{\mathcal{A}}) \Omega_{\mathcal{B}}(b_{(2)}
\triangleright_{\mathcal{B}} s_{\mathcal{B}})\\
&= \rho_{\mathcal{A}} * \rho_{\mathcal{B}}(b).\\
\end{align*}
\end{proof} 
\noindent In the classical case, this corresponds to ``running two automata in parallel''.
\begin{example}
Consider the following automata:
$$
\xymatrix{\ar[r]
& *++[o][F=]{s_1}\ar[r]_{x}
& *++[o][F]{s_2}\ar@/_1.5pc/[l]^{x}
& }
\xymatrix{\ar[r]
& *++[o][F=]{t_1}\ar[r]_{y}
& *++[o][F]{t_2} . \ar@/_1.5pc/[l]^{y} 
& }
$$
They accept the languages denoted by $(xx)^*$ and $(yy)^*$, respectively.
We provide the tensor product of the $K$-algebraic encodings of these automata,
using the comultiplication $\Delta_3$. We assume that both automata have input algebra $K\{x,y\}^*$;
the action of $y$ on the $K$-semimodule of the first automaton is given by the
$2 \times 2$ matrix of 0's, as is the action of $x$ on the $K$-semimodule of the
second.

The $K$-semimodule of the tensor product is the free $K$-semimodule on
the set $\{s_1 \otimes t_1, s_1 \otimes t_2, s_2 \otimes t_1, s_2 \otimes
t_2\}$, by  Lemma \ref{lemma:tensorproperties}.6. The start vector is
$$
\left[
\begin{array}{llll}
1 & 0 & 0 & 0 \\ \end{array}
\right],
$$
the right $x,y$ actions are given by
$$
\left[
\begin{array}{llll}
0 & 0 & 1 & 0 \\
0 & 0 & 0 & 1 \\
1 & 0 & 0 & 0 \\
0 & 1 & 0 & 0\\ \end{array}
\right],
\left[
\begin{array}{llll}
0 & 1 & 0 & 0 \\
1 & 0 & 0 & 0 \\
0 & 0 & 0 & 1 \\
0 & 0 & 1 & 0\\ \end{array}
\right]
$$
respectively, and the observation function is given by
$$
\left[
\begin{array}{llll}
k_1 & k_2 & k_3 & k_4 \\
\end{array}
\right] \cdot
\left[
\begin{array}{l}
1 \\
0 \\
0 \\
0 \\ 
\end{array}
\right].
$$
\end{example}

\section{$K$-linear Automata and Deterministic Automata}
\label{section:determinize}
We now define deterministic automata and relate deterministic automata to
$K$-linear automata.  We treat only right automata; the left automata case is similar.
\subsection{Deterministic Automata}
Let the symbol $1$ denote a canonical one-element set.  
\begin{definition}
A {\em right deterministic automaton} $D = (S,\Sigma,\alpha,\delta,\Omega,O)$
consists of:
\begin{enumerate}
\item A set $S$ of states, 
\item An input alphabet $\Sigma$,
\item A {\em start function} $\alpha: 1 \rightarrow S$,
\item A {\em transition function} $\delta: \Sigma \rightarrow (S \rightarrow
S)$,
\item A set $O$ of outputs and an {\em output function} $\Omega:S \rightarrow
O$.
\end{enumerate}
\end{definition}
We use ``rightness'' to extend the domain of $\delta$ from $\Sigma$
to $\Sigma^*$. Let $\Endr(S)$ be the monoid consisting of all
functions $S \rightarrow S$ with composition defined on the right.  By
freeness of $\Sigma^*$, $\delta$ can be uniquely extended to a monoid
homomorphism $$\delta_w: \Sigma^* \rightarrow \Endr(S).$$ 
Using $\delta_w$, we define the language accepted by $D$.
\begin{definition}
Let $D$ be a deterministic automaton.  The {\em language accepted} by $D$ is
the function
$$\rho: \Sigma^* \rightarrow O$$
$$\rho(w) = \Omega(\delta_w(\alpha(1))).$$
\end{definition}
Of special importance are maps between automata which preserve the language
accepted.
\begin{definition}
Let $D = (S,\Sigma,\alpha_D,\delta_D,\Omega_D,O)$ and $E=
(T,\Sigma,\alpha_E,\delta_E,\Omega_E,O)$ be deterministic automata.  
A {\em deterministic automaton morphism} is a map $$f: S \rightarrow T$$
such that the following diagrams commute:
$$
\xymatrix{
1 \ar[r]^{\alpha_D} \ar[dr]_{\alpha_E} & S\ar[d]^{f} &
S \ar[r]^{\delta_D} \ar[d]_{f} & S \ar[d]^{f} &
S \ar[r]^{\Omega_D}  \ar[d]_{f}& O\\
& T &
T \ar[r]_{\delta_E} & T &
T. \ar[ur]_{\Omega_E} 
}
$$
\end{definition}

If such a map exists, then $\rho_D(w) = \rho_E(w)$ for all $w \in \Sigma^*$;
the proof is essentially the same as the proof of Theorem
\ref{theorem:samelanguage}.  As with $K$-linear automata, deterministic
automata and deterministic automaton morphisms form a category.

Given an automaton $D$, we can remove states that don't contribute to $\rho_D$.
\begin{definition}
Let $D = (S,\Sigma,\alpha,\delta,\Omega,O)$ be a deterministic automaton.
A state $s \in S$ is {\em accessible} if there exists a $w \in \Sigma^*$
such that
$$\delta_w(\alpha(1)) = s.$$ 
\end{definition}

\begin{definition}
Let $D = (S,\Sigma,\alpha,\delta,\Omega,O)$ be a deterministic automaton.
Let $S'$ be the set of accessible states of $D$ and let $i$ be the
inclusion $S' \rightarrow S$.  The {\em accessible subautomaton} of $D$ is the
automaton $D' = (S',\Sigma,\alpha,\delta \circ i, \Omega \circ i, O)$.
\end{definition}
\begin{lemma}
\label{lemma:accessible}
Let $D = (S,\Sigma,\alpha,\delta,\Omega,O)$ be a deterministic automaton and
let $D'$ be its accessible subautomaton.  Then $\rho_D = \rho_{D'}$.
\end{lemma}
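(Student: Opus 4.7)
The plan is to observe that the computation of $\rho_D(w) = \Omega(\delta_w(\alpha(1)))$ only ever touches states that are reachable from $\alpha(1)$, and these are exactly the accessible states by definition. So restricting to the accessible subautomaton cannot change the value of the language function. The proof therefore amounts to showing that $\delta_w(\alpha(1))$ lies in $S'$ for every $w \in \Sigma^*$, and that performing the extended transition inside $D'$ gives the same element of $S$ as performing it in $D$.

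First I would note that $\alpha(1) \in S'$, since $\alpha(1) = \delta_{\lambda}(\alpha(1))$ where $\lambda$ is the empty word, witnessing accessibility. Next, I would do a short induction on the length of $w$ to show simultaneously two things: (i) $\delta_w(\alpha(1)) \in S'$, and (ii) $\delta_w(\alpha(1)) = (\delta \circ i)_w(\alpha(1))$, where I implicitly identify $S'$ with $i(S') \subseteq S$. The base case $w = \lambda$ is immediate from the previous sentence. For the inductive step $w = w'x$ with $x \in \Sigma$, by induction hypothesis $s := \delta_{w'}(\alpha(1)) \in S'$ and equals the corresponding state computed in $D'$; then $\delta_w(\alpha(1)) = \delta_x(s)$ is accessible via $w$, so it lies in $S'$, and $(\delta \circ i)_x$ acts on the element $s \in S'$ exactly as $\delta_x$ does on $i(s) \in S$, since $i$ is inclusion.

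Finally, since $\Omega_{D'} = \Omega \circ i$ and the accessible state computed in $D'$ coincides with the state computed in $D$, we have
\begin{align*}
\rho_{D'}(w) &= \Omega_{D'}((\delta \circ i)_w(\alpha(1))) \\
&= \Omega(i((\delta \circ i)_w(\alpha(1)))) \\
&= \Omega(\delta_w(\alpha(1))) \\
&= \rho_D(w)
\end{align*}
for every $w \in \Sigma^*$, which is the desired equality.

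There is no real obstacle here; the only thing to be slightly careful about is the bookkeeping around the extension of $\delta$ to $\delta_w$ via the monoid homomorphism into $\Endr(S)$ versus $\Endr(S')$, and the fact that composition of restrictions along $i$ behaves correctly because each intermediate state remains accessible. Once that induction is in place, the conclusion follows immediately from applying $\Omega$.
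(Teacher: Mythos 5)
Your proof is correct, but it takes a more elementary route than the paper. The paper's entire proof is one sentence: the inclusion $i : S' \rightarrow S$ is a deterministic automaton morphism $D' \rightarrow D$, and it was already observed (immediately after the definition of deterministic automaton morphism, by the same argument as Theorem \ref{theorem:samelanguage}) that the existence of such a morphism forces $\rho_{D'} = \rho_D$. Your induction on the length of $w$ is, in effect, an unfolding of exactly the facts that make that one-liner work: your claim (i) that $\delta_w(\alpha(1))$ stays in $S'$ is the closure of the accessible states under transitions, which is what makes the transition square for $i$ commute (and, incidentally, what makes $\delta \circ i$ a well-defined transition function on $S'$ in the first place); your claim (ii) and the final chain of equalities reproduce the proof that a morphism preserves the accepted language. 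What the paper's approach buys is brevity and reuse of the general morphism machinery, which is the organizing theme of Sections 9 and 10; what your approach buys is self-containedness and an explicit verification that the restricted transition structure is well defined, a point the paper glosses over. Either is acceptable, though in the context of this paper the intended argument is the categorical one.
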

\begin{proof}
The inclusion $S' \rightarrow S$ is a deterministic automaton morphism.
\end{proof}
A useful property of deterministic automata is
that they can be minimized.  This is a consequence of a certain category having
a final object; we must first tweak a definition.
\begin{definition}
A {\em deterministic labelled transition system} (dlts) $D =\\
(S,\Sigma,\delta,\Omega,O)$ is a deterministic automaton without a specified
start state. A {\em deterministic labelled transition system morphism}
is defined as a deterministic automaton morphism without the condition on the
start state.
\end{definition}
\begin{definition}
Let $D = (S,\Sigma,\delta,\Omega,O)$ be a dlts, 
and let $s \in S$.  The {\em language accepted} by $s$ is the function 
$$L_s(w): \Sigma^* \rightarrow O$$
$$L_s(w) = \Omega(\delta_w(s)).$$
\end{definition}
\begin{theorem}
Let $\Sigma$ be an alphabet and $O$ be a set of outputs.  Let $\mathfrak{C}$ be
the category of dlts's with input alphabet
$\Sigma$ and output set $O$, and morphisms thereof.  Then $F  =
(S,\Sigma,\delta,\Omega,O)$ is a final object of $\mathfrak{C}$, where
\begin{enumerate}
  \item $S = O^{\Sigma^*}$,
  \item $\delta(\psi)(w) = \psi(xw)$ for $\psi \in O^{\Sigma^*}, x \in
  \Sigma, w \in \Sigma^*$,
  \item $\Omega(\psi) = \psi(\lambda)$, for $\psi \in O^{\Sigma^*}$. 
\end{enumerate}
\end{theorem}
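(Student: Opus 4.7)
The plan is to define the obvious candidate morphism—the one that sends each state to the language it accepts—and verify that it is both a dlts morphism and the unique such morphism. Concretely, for any dlts $D = (T, \Sigma, \delta_D, \Omega_D, O)$, I would define $f_D : T \to S = O^{\Sigma^*}$ by $f_D(t) = L_t$, where $L_t(w) = \Omega_D(\delta_{D,w}(t))$ is the language accepted by the state $t$ in the sense given just before the theorem.

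The first step is to check that $f_D$ is a dlts morphism. Commutativity of the output diagram is immediate: $\Omega(f_D(t)) = f_D(t)(\lambda) = \Omega_D(\delta_{D,\lambda}(t)) = \Omega_D(t)$ since $\delta_{D,\lambda}$ is the identity in $\Endr(T)$. For the transition diagram, pick $x \in \Sigma$ and $t \in T$; then for any $w \in \Sigma^*$,
\begin{align*}
f_D(\delta_D(x)(t))(w) &= \Omega_D(\delta_{D,w}(\delta_D(x)(t))) \\
&= \Omega_D(\delta_{D,xw}(t)) \\
&= f_D(t)(xw) \\
&= \delta(x)(f_D(t))(w),
\end{align*}
using that $\delta_{D,w} \circ \delta_D(x) = \delta_{D,xw}$ by the monoid homomorphism property of the extension $\delta_{D,\bullet}$.

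For uniqueness, the key observation is that every element $\psi$ of $F$ is determined by its ``output after reading $w$'', in the sense that $\psi(w) = \Omega(\delta_w(\psi))$ for all $w \in \Sigma^*$. This follows by a short induction on $|w|$: the base case $w = \lambda$ is the definition of $\Omega$, and the inductive step $\delta_{xw}(\psi)(\lambda) = \delta_w(\delta(x)(\psi))(\lambda) = \delta(x)(\psi)(w) = \psi(xw)$ uses the definition of $\delta$. Granted this, any dlts morphism $g : D \to F$ satisfies
\begin{align*}
g(t)(w) &= \Omega(\delta_w(g(t))) \\
&= \Omega(g(\delta_{D,w}(t))) \\
&= \Omega_D(\delta_{D,w}(t)) \\
&= f_D(t)(w),
\end{align*}
so $g = f_D$.

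There is no real mathematical obstacle here; the theorem is a standard coalgebraic ``final coalgebra of languages'' result and each diagram chases by routine unfolding. The only thing to watch is notational hygiene, since $\delta$ is curried as $\Sigma \to (S \to S)$ and must be extended to $\Sigma^* \to (S \to S)$ as a monoid homomorphism into $\Endr(S)$ before the computations above make sense; once that convention is fixed, everything reduces to recognizing that evaluation $\psi \mapsto (w \mapsto \Omega(\delta_w(\psi)))$ is the identity on $F$.
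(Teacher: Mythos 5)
Your proof is correct and follows exactly the route the paper indicates: the paper's proof simply cites Section 10 of Rutten's \emph{Universal Coalgebra} and names the unique morphism $s \mapsto L_s$, which is precisely the map $f_D$ you construct and verify. You have merely supplied the routine diagram chases and the uniqueness induction that the paper delegates to the reference.
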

\begin{proof}
See Section 10 of \cite{bib:ucatos} (also the references
contained therein).  Given a dlts $D$,
the unique morphism $D \rightarrow F$ is $s \mapsto L_s$ for $s \in S_D$.
In the classical case, $F$ is the dlts with a state for each formal
language $L \subseteq \Sigma^*$ and transitions given by
Brzozowski derivatives.
\end{proof}
\begin{definition}
Let $D =(S,\Sigma,\alpha, \delta,\Omega,O)$ be a deterministic automaton
with all states accessible. The {\em minimization} of $D$, denoted $M(D)$, 
is the deterministic automaton obtained by the following procedure:
\begin{enumerate}
  \item Construct the underlying dlts $D'$ by ignoring
  the start function $\alpha$.
  \item Map $D'$ to $F$ via the unique morphism $f:s \mapsto L(s).$
  \item $M(D) = f(D')$ endowed with start state $f(\alpha_D(1))$.  The dlts
  morphism $f$ enriched with start state information is the unique deterministic
  automaton morphism $D \rightarrow M(D)$.
\end{enumerate} 
\end{definition}
This definition is justified in \cite{bib:aacaeic}.  The morphism
$D \rightarrow M(D)$ is, in particular, a function from the state set $S_D$
to the state set $S_{M(D)}$.  Any $D,D'$ which accept the same language
map to the same $M(D)$ by definition, so $|S_{M(D)}| \leq |S_D|$ (this is true
even if the automata involved have infinitely many states).

\subsection{K-linear Automata to Deterministic Automata}
Let $\mathcal{A} = (M,A,\alpha,\triangleleft, \Omega)$ be a $K$-linear
automaton. We wish to construct a deterministic automaton $D$ which is in some sense equivalent to $A$. 
This is possible using the notion of an adjunction between categories.   There
are many equivalent definitions of adjunctions used in practice, we recall the one
most useful for our purposes.
\begin{definition}
Let $\mathfrak{A}$ and $\mathfrak{D}$ be categories, $F$ a functor from
$\mathfrak{D}$ to $\mathfrak{A}$, and $U$ a functor from $\mathfrak{A}$ to
$\mathfrak{D}$. An {\em adjunction} from $\mathfrak{D}$ to $\mathfrak{A}$ is a
bijection $\psi$ which assigns to each arrow $f:F(D) \rightarrow A$ of 
$\mathfrak{A}$ an arrow $\psi f: D \rightarrow U(A)$ of $\mathfrak{D}$ such that 
$$\psi (f \circ Fh) = (\psi f) \circ h,$$
$$\psi (k \circ f) = Uk \circ (\psi f)$$
holds for all $f$ and all arrows $h:D' \rightarrow D$ and $k: A \rightarrow
A'$. Equivalently, for every arrow $g: D \rightarrow U(A)$,
$$\psi^{-1}(gh) = \psi^{-1}g \circ (Fh),$$
$$\psi^{-1}(Uk \circ g) = k \circ (\psi^{-1}g)$$
(omitting unnecessary parentheses).
\end{definition}
\begin{example}
Note that we use the notation of this example throughout the sequel.
Let $U'$ be the forgetful functor from \Kmod \text{ }to {\bf Set} and $F'$
the corresponding free functor.  The adjunction $\theta$ from
{\bf Set} to \Kmod \text{ }takes as input a $K$-linear
map $\phi:F'(X) \rightarrow M$ and returns the set map
$X \rightarrow U'(M)$ obtained by restricting $\phi$ to $X$.
\end{example}
Our goal is to construct a ``determinizing'' functor from a category
of $K$-linear automata to a category of deterministic automata, and a
``free $K$-linear'' functor in the opposite direction, and then to
show that these two functors are related by an adjunction.
In order for this to work nicely, we make the following assumptions.
\begin{enumerate}
  \item The input $K$-algebra of the $K$-linear automata is 
  the free $K$-algebra on a finite set $\Sigma$.
  \item The input alphabet of the deterministic automata is $\Sigma$,
  and the output set of the deterministic automata is the underlying 
  set of $K$.
\end{enumerate}
When considering start functions, we treat $K$ as $F'(1)$.

Let $\mathfrak{A}$ be a category of $K$-linear automata and $K$-linear
automaton morphisms, satisfying assumption 1 above, and let $\mathfrak{D}$
be a category of deterministic automata and deterministic automaton morphisms,
satisfying assumption 2.  We define a functor $U$ from $\mathfrak{A}$ to
$\mathfrak{D}$ which in the classical case corresponds to determinization
via the subset construction.

On $K$-linear automata, $U$ behaves as follows.  Given a $K$-linear automaton 
$\mathcal{A} = (M,K\Sigma^*,\alpha,\triangleleft, \Omega)$, 
$$U(\mathcal{A}) = (U'(M),\Sigma, \theta(\alpha), \delta, U'(\Omega), U'(K)),$$
where $\delta$ is defined as follows.  The
action $M \triangleleft K\Sigma^*$ is equivalent to a $K$-algebra map
$$K\Sigma^* \rightarrow \Endr(M).$$
Restricting this action to the generators of $K\Sigma^*$ yields 
a map $t$ from $\Sigma$ to the right endomorphism monoid of $M$; define
$\delta(x) = U'(t(x))$.

We now define $U$ on arrows of $\mathfrak{A}$. 
Let $\mathcal{A} =
(M,K\Sigma^*,\alpha_{\mathcal{A}},\triangleleft_{\mathcal{A}}, 
\Omega_{\mathcal{A}})$ and $\mathcal{B} =
(N,K\Sigma^*,\alpha_{\mathcal{B}},\triangleleft_{\mathcal{B}},
\Omega_{\mathcal{B}})$ be $K$-linear automata. A $K$-linear automaton morphism
$\phi: \mathcal{A} \rightarrow \mathcal{B}$ is, in particular, a $K$-linear map
$M \rightarrow N$. Define $U(\phi)$ to be the underlying set map $U'(\phi)$.  To
show that $U$ takes morphisms of $\mathfrak{A}$ to morphisms of $\mathfrak{D}$,
we must show that the commutativity of 
 $$
\xymatrix{
F'(1) \ar[r]^{\alpha_{\mathcal{A}}} \ar[dr]_{\alpha_{\mathcal{B}}} &
M\ar[d]^{\phi} & M \ar[r]^{\triangleleft_{\mathcal{A}}} \ar[d]_{\phi} & M
\ar[d]^{\phi} & M \ar[r]^{\Omega_{\mathcal{A}}}  \ar[d]_{\phi}& K\\
& N &
N \ar[r]_{\triangleleft_{\mathcal{B}}} & N &
N \ar[ur]_{\Omega_{\mathcal{B}}} 
}
$$
implies the commutativity of 
$$
\xymatrix{
1 \ar[r]^{\theta(\alpha_{\mathcal{A}})~~}
\ar[dr]_{\theta(\alpha_{\mathcal{B}})} & 
U'(M)\ar[d]^{U'(\phi)} & U'(M) \ar[r]^{\delta} \ar[d]_{U'(\phi)} & U'(M) \ar[d]^{U'(\phi)} &
U'(M) \ar[r]^{U'(\Omega_{\mathcal{A}})}  \ar[d]_{U'(\phi)}& U(K)\\
& U'(N) &
U'(N) \ar[r]_{\delta} & U'(N) &
U'(N). \ar[ur]_{U'(\Omega_{\mathcal{B}})} 
}
$$
The transition and output diagrams commute 
because the functor $U'$ takes commutative diagrams to commutative
diagrams.  To show that the start function diagram commutes, 
note that 
$$\theta (\phi \circ \alpha_{\mathcal{A}}) = U'(\phi)\circ
\theta(\alpha_{\mathcal{A}})$$ since $\theta$ is an adjunction.  Since
$\alpha_{\mathcal{B}} = \phi \circ \alpha_{\mathcal{A}},$ we have
$\theta(\alpha_{\mathcal{B}}) = U'(\phi)\circ \theta(\alpha_{\mathcal{A}})$.
\begin{theorem}
\label{theorem:uisafunctor}
The function $U$ is a functor from $\mathfrak{A}$ to $\mathfrak{D}$.
\end{theorem}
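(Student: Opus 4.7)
The plan is to observe that the substantive content of functoriality has already been carried out in the preceding construction: $U$ has been defined on both objects and arrows of $\mathfrak{A}$, and the text has verified that $U(\phi)$ satisfies the three commutative diagrams required to be a morphism of $\mathfrak{D}$. What remains for a complete proof is (i) to confirm that $U(\mathcal{A})$ really is an object of $\mathfrak{D}$, i.e.\ a valid deterministic automaton with input alphabet $\Sigma$ and output set $U'(K)$, and (ii) to verify the two functoriality axioms: preservation of identities and preservation of composition.

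For (i), I would check each component of $U(\mathcal{A}) = (U'(M),\Sigma,\theta(\alpha),\delta,U'(\Omega),U'(K))$ in turn. The action $M \triangleleft K\Sigma^*$, via the equivalence with an algebra map $K\Sigma^* \to \Endr(M)$ and restriction to the generators $\Sigma$, produces a family $t(x) \in \Endr(M)$; then $\delta(x) = U'(t(x))$ is a set-endomorphism of $U'(M)$, so $\delta$ has the correct profile $\Sigma \to (U'(M) \to U'(M))$. Treating $K$ as $F'(1)$, the adjunction $\theta$ turns $\alpha \colon F'(1) \to M$ into a set map $1 \to U'(M)$, giving a valid start function. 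The output function $U'(\Omega) \colon U'(M) \to U'(K)$ and output set $U'(K)$ are delivered directly by the forgetful functor.

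For (ii), both axioms reduce immediately to the corresponding facts for the forgetful functor $U'$, since $U$ acts on arrows purely by $U(\phi) = U'(\phi)$. Explicitly, for an automaton $\mathcal{A}$ with state semimodule $M$, the identity $K$-linear automaton morphism is the identity on $M$, and $U(\mathrm{id}_{M}) = U'(\mathrm{id}_{M}) = \mathrm{id}_{U'(M)}$, which is the identity morphism of $U(\mathcal{A})$ in $\mathfrak{D}$. For composable automaton morphisms $\phi \colon \mathcal{A} \to \mathcal{B}$ and $\psi \colon \mathcal{B} \to \mathcal{C}$, one has $U(\psi \circ \phi) = U'(\psi \circ \phi) = U'(\psi) \circ U'(\phi) = U(\psi) \circ U(\phi)$ by functoriality of $U'$.

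The main obstacle has in fact already been dispatched in the paragraph preceding the theorem statement, namely the verification that $U(\phi)$ satisfies the start-function square; that step is the only one that is not purely formal, and it rests on the naturality of the adjunction $\theta$ applied to $\alpha_{\mathcal{B}} = \phi \circ \alpha_{\mathcal{A}}$. With that in hand, the remaining work for the theorem itself is routine bookkeeping, inheriting identity- and composition-preservation from $U'$.
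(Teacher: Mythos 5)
Your proposal is correct and follows essentially the same route as the paper: the paper's proof likewise notes that $U$ has already been defined on objects and arrows, and derives preservation of identities and composition from the fact that $U$ is the restriction of the forgetful functor $U'$ to $K$-linear maps that are automaton morphisms. Your additional explicit check that $U(\mathcal{A})$ is a well-formed object of $\mathfrak{D}$ is material the paper places in the construction preceding the theorem rather than in the proof itself, but it is the same content.
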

\begin{proof}
We have given the action of $U$ on objects and morphisms of $\mathcal{A}$.
It remains to show that 
$$U(1_{\mathcal{A}}) = 1_{U({\mathcal{A}})},$$
$$U(\phi' \circ \phi) = U(\phi') \circ U(\phi).$$
This is the case because $U$ is the restriction of the functor $U'$ to
$K$-linear maps which are also $K$-linear automaton morphisms.
\end{proof}
The following theorem follows easily from the definitions.
\begin{theorem}
\label{theorem:sameminimal}
Let $\mathcal{A}$ be a $K$-linear automaton.  Then $\theta(\rho_{\mathcal{A}}) =
\rho_{U(\mathcal{A})}$.
\end{theorem}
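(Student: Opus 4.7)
The plan is to show that the two functions $\Sigma^{*}\to U'(K)$ agree pointwise by carefully unwinding definitions. First, observe that $K\Sigma^{*}$, being the free $K$-algebra on $\Sigma$, is a free $K$-semimodule with basis $\Sigma^{*}$; hence the adjunction $\theta$ simply restricts a $K$-linear map $K\Sigma^{*}\to K$ to this basis. Thus for $w\in \Sigma^{*}$,
$$\theta(\rho_{\mathcal{A}})(w)=\rho_{\mathcal{A}}(w)=\Omega(\alpha(1)\triangleleft w).$$

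Next, I would unpack $\rho_{U(\mathcal{A})}(w)$. By the definition of the language accepted by a deterministic automaton,
$$\rho_{U(\mathcal{A})}(w)=U'(\Omega)\bigl(\delta_{w}(\theta(\alpha)(1))\bigr).$$
Since $\theta(\alpha)$ is the restriction of $\alpha:K\to M$ to $\{1\}\subset K$ (viewed as $F'(1)$), we have $\theta(\alpha)(1)=\alpha(1)$. The central identification is $\delta_{w}(m)=m\triangleleft w$ for all $m\in M$ and $w\in\Sigma^{*}$. Indeed, by construction $\delta(x)=U'(t(x))$, where $t(x)\in \Endr(M)$ is the endomorphism $m\mapsto m\triangleleft x$, and $\delta_{w}$ is obtained from $\delta$ as the unique extension to a monoid homomorphism $\Sigma^{*}\to \Endr(U'(M))$ with composition on the right. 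Writing $w=x_{1}x_{2}\cdots x_{n}$, right-composition gives
$$\delta_{w}(m)=(\cdots((m\triangleleft x_{1})\triangleleft x_{2})\cdots)\triangleleft x_{n},$$
which by associativity of the right action and the fact that multiplication of generators in $K\Sigma^{*}$ is concatenation equals $m\triangleleft w$. Combining these gives $\rho_{U(\mathcal{A})}(w)=\Omega(\alpha(1)\triangleleft w)=\theta(\rho_{\mathcal{A}})(w)$.

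The only subtle point is the bookkeeping: one must keep track of the right-action convention so that the generators of $w=x_{1}\cdots x_{n}$ act in the correct order, and verify that the unique extension of $\delta$ along freeness of $\Sigma^{*}$ as a monoid coincides with the restriction to $\Sigma^{*}$ of the $K$-algebra map $K\Sigma^{*}\to \Endr(M)$ corresponding to $\triangleleft$. Once this is observed, the statement reduces to a short chain of equalities; I do not foresee any genuine obstacle, as the theorem essentially certifies that the construction of $U$ was set up to respect the languages accepted.
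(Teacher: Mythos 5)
Your proposal is correct and follows exactly the route the paper intends: the paper offers no written proof beyond the remark that the theorem ``follows easily from the definitions,'' and your argument is precisely that unwinding --- identifying $\theta(\rho_{\mathcal{A}})$ with the restriction of $\rho_{\mathcal{A}}$ to the basis $\Sigma^{*}$, noting $\theta(\alpha)(1)=\alpha(1)$, and matching $\delta_{w}$ with $m\mapsto m\triangleleft w$ via the uniqueness of the monoid-homomorphism extension. You have correctly flagged the only delicate point (the right-action convention and the agreement of the two extensions along freeness), so nothing is missing.
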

\begin{rmk}
Depending on $K$, it is possible for $U$ to take a $K$-linear automaton
whose underlying $K$-semimodule is the free $K$-semimodule on a finite set $X$
and return a deterministic automaton with infinitely many states.  This is not 
surprising; if the range of the language accepted by
a deterministic automaton $D$ is infinite, then $D$ must have
infinitely many states.  Furthermore, even in the classical case, it well-known that there 
are nondeterministic automata with $n$ states such that any equivalent deterministic 
automaton requires a number of states exponential in $n$. In other words, a
$K$-semimodule structure can be a significant asset to computation.
\end{rmk}
\subsection{Deterministic Automata to K-linear Automata}
We now define a functor $F: \mathfrak{D} \rightarrow \mathfrak{A}$.  In the
classical case, this functor is used implicitly when encoding a deterministic
automaton using matrices. 

Given a deterministic automaton $D= (S,\Sigma,\alpha,\delta,\Omega,U'(K))$,
the free $K$-linear automaton $F(D)$ is 
$$
(F'(S),K\Sigma^*,F'(\alpha), \triangleleft, \theta^{-1}(\Omega))
$$
where $\triangleleft$ is defined as follows. Apply $F'$ to $\delta(x)$ for each
$x \in \Sigma$. This yields a map from $\Sigma$ to $\Endr(F'(S))$,
which has a unique extension to an algebra map $K\Sigma^* \rightarrow
\Endr(F'(S))$.

Let $D = (S,\Sigma,\alpha_D,\delta_D,\Omega_D,U'(K))$ and $E=
(T,\Sigma,\alpha_E,\delta_E,\Omega_E,U'(K))$ be deterministic automata,
and $f$ a morphism $D \rightarrow E$.  Define $F(f) = F'(f)$; we must show that
$F'(f): F'(S) \rightarrow F'(T)$ is a $K$-linear automaton morphism
$F(D) \rightarrow F(E)$.  Dual to the determinizing case, it is easy to
see that $F'(f)$ behaves well on the transition and input functions.
We must show that   
$$\theta^{-1}(\Omega_D) = \theta^{-1}(\Omega_E) \circ F'(f).$$
This follows from the equations 
$\theta^{-1}(\Omega_E \circ f) = \theta^{-1}(\Omega_E) \circ F'(f)$
and \\
$\Omega_E \circ f = \Omega_D.$
\begin{theorem}
The function $F$ defined above is a functor from $\mathfrak{D}$ to 
$\mathfrak{A}$.
\end{theorem}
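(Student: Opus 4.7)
The plan is to discharge the three functor axioms — well-definedness on morphisms, preservation of identities, and preservation of composition — most of the work for which is already contained in the paragraph immediately preceding the theorem. The author has defined $F$ on objects, and set $F(f) = F'(f)$ on morphisms, so the content of the theorem reduces to checking that these assignments are well-typed and respect composition.

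First I would finish the verification that $F(f)$ is actually a $K$-linear automaton morphism by checking the three required squares. The start diagram is immediate from $F'(f) \circ F'(\alpha_D) = F'(f \circ \alpha_D) = F'(\alpha_E)$, using that $f$ is a deterministic automaton morphism and $F'$ is a functor. The observation diagram is exactly the computation already displayed in the text, which rests on the naturality equation $\theta^{-1}(U'k \circ g) = k \circ \theta^{-1}(g)$ of the adjunction $F' \dashv U'$. For the transition diagram, the commutativity $f \circ \delta_D(x) = \delta_E(x) \circ f$ in $\mathbf{Set}$ for each generator $x \in \Sigma$ lifts, under $F'$, to $F'(f) \circ F'(\delta_D(x)) = F'(\delta_E(x)) \circ F'(f)$; this intertwining on generators then extends to all of $K\Sigma^*$ by freeness, because both sides, viewed as functions of the input, are $K$-algebra maps out of $K\Sigma^*$ that agree on $\Sigma$.

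For preservation of identities and composition, I would simply invoke the functoriality of $F'$: since $F(f) = F'(f)$ by definition, the equations $F(1_D) = F'(1_S) = 1_{F'(S)} = 1_{F(D)}$ and $F(f' \circ f) = F'(f' \circ f) = F'(f') \circ F'(f) = F(f') \circ F(f)$ hold automatically. I do not expect any real obstacle; the only step that needs a small argument is the extension of the transition intertwining from $\Sigma$ to $K\Sigma^*$, and this is precisely the ``extend by freeness'' pattern already used when defining the action $\triangleleft$ on $F(D)$ itself, so the proof essentially amounts to bookkeeping.
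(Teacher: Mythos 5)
Your proposal is correct and takes essentially the same route as the paper: the paper's proof is simply ``similar to the proof of Theorem \ref{theorem:uisafunctor},'' i.e., well-definedness of $F$ on morphisms is handled in the paragraph preceding the theorem, and preservation of identities and composition is inherited from the functoriality of $F'$ since $F(f) = F'(f)$. One trivial slip worth fixing: the observation-diagram step rests on the naturality equation $\theta^{-1}(g \circ h) = \theta^{-1}(g) \circ F'(h)$ (naturality in the domain variable), not $\theta^{-1}(U'k \circ g) = k \circ \theta^{-1}(g)$ as you cite, though the displayed computation you defer to is the correct one.
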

\begin{proof}
Similar to the proof of Theorem \ref{theorem:uisafunctor}.
\end{proof}
\subsection {Adjunctions Between Categories of Automata} 
We now show that the functors $F$ and $U$ defined above are related
by an adjunction.  Let $D=(S,\Sigma,\alpha_D,\delta,\Omega_D,U'(K))$ be a
deterministic automaton and
$\mathcal{A}=(M,K\Sigma^*,\alpha_{\mathcal{A}},\triangleleft,\Omega_{\mathcal{A}})$
a $K$-linear automaton. We must find a bijection 
$$\psi: \mathfrak{A}(F(D),A) \rightarrow \mathfrak{D}(D,U(A))$$
such that the conditions of an adjunction are
satisfied. We claim that the desired $\phi$ is a restriction of the adjunction
between \Kmod \text{ } and {\bf Set}.  
\begin{lemma}
\label{lemma:restrict}
Let $D=(S,\Sigma,\alpha_D,\delta,\Omega_D,U'(K))$ be a deterministic
automaton,\\
$\mathcal{A}=(M,K\Sigma^*,\alpha_{\mathcal{A}},\triangleleft,\Omega_{\mathcal{A}})$
a $K$-linear automaton, and $\phi$ a $K$-linear automaton morphism $F(D) \rightarrow \mathcal{A}$.  Then $$\psi(\phi) = \phi|_S:D \rightarrow
U(\mathcal{A})$$ is a deterministic automaton morphism $D \rightarrow U(\mathcal{A})$.
\end{lemma}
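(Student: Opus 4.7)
The plan is to check that $\phi|_S$ satisfies the three commuting diagrams (start, transition, output) defining a deterministic automaton morphism $D \to U(\mathcal{A})$, by transporting each of the corresponding three conditions on $\phi$ (as a $K$-linear automaton morphism $F(D) \to \mathcal{A}$) across the adjunction $\theta$. Unpacking definitions, $F(D)$ has start function $F'(\alpha_D)$, right action obtained by freely extending $x \mapsto F'(\delta_D(x))$ to $K\Sigma^*$, and observation $\theta^{-1}(\Omega_D)$; dually, $U(\mathcal{A})$ has start $\theta(\alpha_{\mathcal{A}})$, transition $\delta_{U(\mathcal{A})}(x) = U'(t(x))$ (where $t(x)$ is the action of $x$ on $M$), and output $U'(\Omega_{\mathcal{A}})$.

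For the start condition, $\phi$ being a morphism gives $\phi \circ F'(\alpha_D) = \alpha_{\mathcal{A}}$. Applying $\theta$ to both sides and using the naturality identity $\theta(f \circ F'h) = \theta(f) \circ h$ with $h = \alpha_D$ yields $\phi|_S \circ \alpha_D = \theta(\alpha_{\mathcal{A}})$, which is exactly the start diagram for $U(\mathcal{A})$. For the transition condition, the key observation is that on a basis element $s \in S \subseteq F'(S)$, one has $x \triangleleft_{F(D)} s = \delta_D(x)(s) \in S$, since the $K\Sigma^*$-action on $F'(S)$ is the free extension of the set maps $\delta_D(x)$. The intertwining identity $\phi(x \triangleleft_{F(D)} s) = x \triangleleft \phi(s) = t(x)(\phi(s))$, restricted to $s \in S$ and coupled with $\delta_{U(\mathcal{A})}(x) = U'(t(x))$, collapses to $\phi|_S \circ \delta_D(x) = \delta_{U(\mathcal{A})}(x) \circ \phi|_S$ for each $x \in \Sigma$.

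For the output condition, the hypothesis is $\Omega_{\mathcal{A}} \circ \phi = \theta^{-1}(\Omega_D)$. Applying $\theta$ and using the second adjunction identity $\theta(k \circ f) = U'(k) \circ \theta(f)$ (with $k = \Omega_{\mathcal{A}}$, $f = \phi$), together with $\theta \circ \theta^{-1} = \mathrm{id}$, yields $U'(\Omega_{\mathcal{A}}) \circ \phi|_S = \Omega_D$, which is the output diagram for the target $U(\mathcal{A})$.

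The main obstacle is really just careful bookkeeping: keeping track of the distinction between a $K$-linear map on $F'(S)$ and its restriction to the generating set $S$, and between the $K\Sigma^*$-action on $F'(S)$ and its restriction to the set-theoretic transitions $\delta_D(x)$ on $S$. Once the definitions of $F$, $U$, and $\theta$ are fully unpacked, each of the three required diagrams reduces immediately to one of the two adjunction identities (or, in the transition case, to the freeness of the $K\Sigma^*$-action on $F'(S)$).
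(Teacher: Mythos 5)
Your proposal is correct and follows the same route as the paper: the paper simply displays the three commuting diagrams expressing that $\phi$ is a $K$-linear automaton morphism and asserts that the corresponding diagrams for $\phi|_S$ follow ``by diagram chasing,'' which is exactly the chase you carry out explicitly via the adjunction identities for $\theta$ and the freeness of the action on $F'(S)$. Your write-up supplies the details the paper leaves implicit, but there is no difference in approach.
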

\begin{proof}
By definition of $F$ and $U$, and the fact that $\phi$ is a $K$-linear automaton
morphism, the following diagrams commute:
$$
\xymatrix{
F'(1) \ar[r]^{F'(\alpha_D)} \ar[dr]_{\alpha_{\mathcal{A}}} & F'(S)\ar[d]^{\phi}
& F'(S) \ar[r]^{\delta} \ar[d]_{\phi} & F'(S) \ar[d]^{\phi} &
F'(S) \ar[rr]^{\theta^{-1}(\Omega_D)}  \ar[d]_{\phi}&  &K\\
& M &
M \ar[r]_{\triangleleft_{\mathcal{A}}} & M &
M. \ar[urr]_{\Omega_{\mathcal{A}}} 
}
$$
To show that $\psi(f)$ is a deterministic automaton morphism, we must show the
the commutativity of
$$
\xymatrix{
1 \ar[r]^{\alpha_D} \ar[dr]_{\theta(\alpha_A)} & S \ar[d]^{\psi(\phi)} &
S \ar[r]^{\delta_D} \ar[d]_{\psi(\phi)} & S \ar[d]^{\psi(\phi)} &
S \ar[r]^{\Omega_D}  \ar[d]_{\psi(\phi)}& U'(K)\\
& U'(M) &
U'(M) \ar[r]_{\delta} & U'(M) &
U'(M). \ar[ur]_{U'(\Omega_A)} 
}
$$
This can easily be shown by diagram chasing.
\end{proof}
Note that $\psi(\phi) = \theta(\phi)$, when $\phi$ is considered as a $K$-linear
map.
\begin{lemma}
\label{lemma:extend}
Let $D=(S,\Sigma,\alpha_D,\delta,\Omega_D,U'(K))$ be a deterministic
automaton,\\
$\mathcal{A}=(M,K\Sigma^*,\alpha_{\mathcal{A}},\triangleleft,\Omega_{\mathcal{A}})$
a $K$-linear automaton, and $f$ a deterministic automaton morphism $D \rightarrow U({\mathcal{A}})$.  Then 
$$\psi^{-1}(f) = F(D) \rightarrow \mathcal{A},$$ 
the $K$-linear extension of $f$, is a $K$-linear automaton morphism $F(D)
\rightarrow \mathcal{A}$.
\end{lemma}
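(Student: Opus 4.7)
The plan is to verify the three defining properties of a $K$-linear automaton morphism for the map $\psi^{-1}(f):F(D) \to \mathcal{A}$, by combining the adjunction $\theta$ between \Kmod \text{ }and {\bf Set} with the fact that $f:D \to U(\mathcal{A})$ is a deterministic automaton morphism. Concretely, $\psi^{-1}(f)$ is defined to be $\theta^{-1}(f):F'(S) \to M$, the unique $K$-linear extension of the set map $f:S \to U'(M)$. Two of the three conditions then follow by pure naturality manipulations of $\theta$, while action compatibility is the only substantive check.

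For the start-vector condition, I would apply $\theta^{-1}$ to the equation $f \circ \alpha_D = \theta(\alpha_{\mathcal{A}})$ coming from the start diagram for $f$. The naturality clause $\theta^{-1}(g \circ h) = \theta^{-1}(g) \circ F'(h)$ together with $\theta^{-1} \circ \theta = 1$ yields $\psi^{-1}(f) \circ F'(\alpha_D) = \alpha_{\mathcal{A}}$, which is exactly what is required. For the observation condition, the morphism hypothesis $\Omega_D = U'(\Omega_{\mathcal{A}}) \circ f$ can be fed through the other naturality clause $\theta^{-1}(U'(k) \circ g) = k \circ \theta^{-1}(g)$ to give $\theta^{-1}(\Omega_D) = \Omega_{\mathcal{A}} \circ \psi^{-1}(f)$, matching the observation map of $F(D)$.

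The main obstacle is action compatibility: showing that $\psi^{-1}(f)(m \triangleleft a) = \psi^{-1}(f)(m) \triangleleft a$ for all $m \in F'(S)$ and $a \in K\Sigma^*$. The difficulty is that the action of $K\Sigma^*$ on $F'(S)$ was constructed in two stages (first free-extending each $\delta(x)$ to a map on $F'(S)$, then algebraically extending from $\Sigma$ to $K\Sigma^*$), so one cannot simply invoke naturality of $\theta$. My plan is a two-stage reduction. First, both sides are $K$-bilinear in $m$ and $a$, so it suffices to check the identity for basis elements $m = s \in S$ and $a = w \in \Sigma^*$. Second, I would induct on $|w|$: the base case $w = 1_{K\Sigma^*}$ is immediate, and the inductive step uses the algebra-action axiom $s \triangleleft (xw') = (s \triangleleft x) \triangleleft w'$ on both sides. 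The length-one case $w = x \in \Sigma$ reduces exactly to $f(s \triangleleft_D x) = f(s) \triangleleft_{U(\mathcal{A})} x$, which by definition of $U(\mathcal{A})$ is the transition-preservation diagram of $f$ as a deterministic automaton morphism.

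Once the action compatibility is verified on basis elements and words, $K$-linearity propagates it to all of $F'(S) \otimes K\Sigma^*$, completing the verification that $\psi^{-1}(f)$ is a $K$-linear automaton morphism. I expect the start and observation parts to take a line or two each, with essentially all the proof's length spent writing out the bilinearity/induction argument for the action.
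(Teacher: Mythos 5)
Your proposal is correct and follows the same route as the paper, which merely asserts that the commutativity of the diagrams for $f$ implies the commutativity of the diagrams for $\psi^{-1}(f)$ and calls it ``easy'' diagram chasing; you supply exactly the details being waved at, namely the two naturality clauses of $\theta$ for the start and observation conditions and the reduction of action compatibility to generators (the fact the paper itself invokes later when it remarks that the intertwining condition need only be checked on generators of the algebra). No gaps.
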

\begin{proof}
Let $\phi = \psi^{-1}(f)$.  As in the proof of Lemma \ref{lemma:restrict}; it is
easy to see that the commutativity of
$$
\xymatrix{
1 \ar[r]^{\alpha_D} \ar[dr]_{\theta(\alpha_{\mathcal{A}})} & S \ar[d]^{f} &
S \ar[r]^{\delta_D} \ar[d]_{f} & S \ar[d]^{f} &
S \ar[r]^{\Omega_D}  \ar[d]_{f}& U'(K)\\
& U'(M) &
U'(M) \ar[r]_{\delta} & U'(M) &
U'(M) \ar[ur]_{U'(\Omega_{\mathcal{A})}} 
}
$$
implies the commutativity of 
$$
\xymatrix{
F'(1) \ar[r]^{F'(\alpha_D)} \ar[dr]_{\alpha_{\mathcal{A}}} & 
F'(S)\ar[d]^{\phi} &  F'(S) \ar[r]^{\delta} 
\ar[d]_{\phi} & F'(S) \ar[d]^{\phi} & F'(S)
\ar[rr]^{\theta^{-1}(\Omega_D)} \ar[d]_{\phi}& & K\\ & M &
M \ar[r]_{\triangleleft_{\mathcal{A}}} & M &
M. \ar[urr]_{\Omega_{\mathcal{A}}} 
}
$$
\end{proof}
\begin{theorem}
\label{theorem:adjoint}
The function $\psi$ is an adjunction from $\mathfrak{D}$ to $\mathfrak{A}$.
\end{theorem}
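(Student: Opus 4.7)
The plan is to leverage the adjunction $\theta$ between \textbf{Set} and \Kmod and to piggyback on the two preceding lemmas. Concretely, I would first observe that $\psi$ is, by construction, the restriction of $\theta$: given $\phi \in \mathfrak{A}(F(D),\mathcal{A})$, the underlying $K$-linear map $\phi: F'(S) \to M$ is sent by $\theta$ to the set map $\phi|_S : S \to U'(M)$, which is what $\psi$ returns. So both bijectivity and naturality will ultimately be inherited from $\theta$; the only content is showing that the bijection and the naturality squares restrict properly to the subsets of automaton morphisms inside $\mathfrak{A}(F'(S), M)$ and $\mathfrak{D}(S, U'(M))$.

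For bijectivity, I would argue as follows. Lemma \ref{lemma:restrict} says that if $\phi$ is a $K$-linear automaton morphism, then $\psi(\phi) = \phi|_S$ is a deterministic automaton morphism, so $\psi$ maps into $\mathfrak{D}(D,U(\mathcal{A}))$. Lemma \ref{lemma:extend} says that if $f$ is a deterministic automaton morphism, then the $K$-linear extension $\psi^{-1}(f) = \theta^{-1}(f)$ is a $K$-linear automaton morphism, so $\psi^{-1}$ maps into $\mathfrak{A}(F(D),\mathcal{A})$. Since $\theta$ and $\theta^{-1}$ are mutually inverse on the ambient $K$-linear-map / set-map hom-sets, their restrictions $\psi$ and $\psi^{-1}$ are mutually inverse between the two sub-hom-sets.

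Next I would verify the two naturality axioms. Given $h: D' \to D$ in $\mathfrak{D}$ and $\phi: F(D) \to \mathcal{A}$ in $\mathfrak{A}$, unpacking definitions gives $\psi(\phi \circ F(h)) = \theta(\phi \circ F'(h))$ and $(\psi \phi) \circ h = \theta(\phi) \circ h$; these agree by the corresponding naturality axiom of $\theta$ (applied with $F'$ in place of $F$). Similarly, for $k: \mathcal{A} \to \mathcal{A}'$ in $\mathfrak{A}$, $\psi(k \circ \phi) = \theta(k \circ \phi)$ and $U(k) \circ \psi(\phi) = U'(k) \circ \theta(\phi)$ agree by the other naturality axiom of $\theta$, since $U$ is defined as $U'$ on the underlying $K$-linear map of $k$. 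Thus both adjunction identities for $\psi$ reduce immediately to the known identities for $\theta$.

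The only genuinely nontrivial step is the one already discharged in Lemmas \ref{lemma:restrict} and \ref{lemma:extend}, namely that restriction and $K$-linear extension preserve the respective morphism conditions on start, transition, and output maps; given those, the remainder of the proof is a short unwinding of the definition $\psi = \theta$ (on underlying data) together with the naturality of $\theta$. I do not anticipate any obstacle beyond this bookkeeping.
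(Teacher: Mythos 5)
Your proposal is correct and follows essentially the same route as the paper: bijectivity comes from Lemmas \ref{lemma:restrict} and \ref{lemma:extend}, and the two naturality identities are inherited from the adjunction $\theta$ between \textbf{Set} and \Kmod{} because $\psi$ is its restriction and $U(k)=U'(k)$, $F(h)=F'(h)$ on underlying data. Your write-up just spells out the bookkeeping a bit more explicitly than the paper does.
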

\begin{proof}
Lemmas \ref{lemma:restrict} and \ref{lemma:extend} imply that $\psi$ is a
bijection between $\mathfrak{A}(F(D),A)$ and $\mathfrak{D}(D,U(A))$.
Furthermore, $\psi$ is the restriction of the adjunction between \Kmod \text{ }
and {\bf Set} to $K$-linear maps which are also automaton morphisms.  
For all arrows $k: A \rightarrow A'$ in $\mathfrak{A}$ and
$h: D' \rightarrow D$ in $\mathfrak{D}$, we have $Uk = U'k$
and $Fh = F'h$. Therefore
$$\psi (\phi \circ Fh) = \psi \phi \circ h,$$
$$\psi (k \circ \phi) = Uk \circ \psi \phi$$
for all arrows $\phi: F(D) \rightarrow A$.
\end{proof}
\section{Automaton Morphisms as Equivalence Proofs}
\label{section:proofs}

By Theorem \ref{theorem:samelanguage}, $K$-linear automaton morphisms 
preserve the language accepted by an automaton.  This can be thought
of as a soundness proof for a proof system for $K$-linear automaton equivalence
in which a proof consists of a sequence of $K$-linear automata and morphisms
between them.  We now show that given any two equivalent $K$-linear automata
$\mathcal{A}$ and $\mathcal{B}$, we can find a sequence of
$K$-linear automata and morphisms from $\mathcal{A}$ to $\mathcal{B}$; i.e.,
that the aforementioned proof system is complete.
\begin{theorem}
\label{theorem:sequence}
Let $\mathcal{A}$ be a $K$-linear automaton.  We have the following sequence
of $K$-linear automata and morphisms:
$$
\xymatrix{
\mathcal{A} & F(U(\mathcal{A})) \ar[l]_{\epsilon \quad}& & F(U(\mathcal{A})')
\ar[ll]_{F(i)} \ar[rr]^{F(m)} & & F(M(U(\mathcal{A})')) }
$$
\end{theorem}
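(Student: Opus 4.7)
The plan is to exhibit each of the three morphisms in the chain by unpacking the ingredients already developed in the excerpt: the adjunction $\psi$ between $\mathfrak{D}$ and $\mathfrak{A}$ (Theorem \ref{theorem:adjoint}), the inclusion of an accessible subautomaton (Lemma \ref{lemma:accessible}), and the minimization construction. Nothing genuinely new is needed; the content of the theorem is that the building blocks fit together.

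First I would construct the counit $\epsilon: F(U(\mathcal{A})) \to \mathcal{A}$. Since $F$ is left adjoint to $U$ via $\psi$, the standard recipe is to take $\epsilon_{\mathcal{A}} = \psi^{-1}(1_{U(\mathcal{A})})$; Lemma \ref{lemma:extend} applied to the deterministic automaton morphism $1_{U(\mathcal{A})}: U(\mathcal{A}) \to U(\mathcal{A})$ guarantees that $\epsilon_{\mathcal{A}}$ is a genuine $K$-linear automaton morphism. Concretely, it is the $K$-linear extension of the inclusion of the state set of $U(\mathcal{A})$ into the underlying set of $M$.

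Next I would produce $F(i)$ and $F(m)$ by applying the functor $F$ to morphisms living in $\mathfrak{D}$. The deterministic automaton $U(\mathcal{A})$ has an accessible subautomaton $U(\mathcal{A})'$ together with the inclusion deterministic automaton morphism $i: U(\mathcal{A})' \to U(\mathcal{A})$ used in Lemma \ref{lemma:accessible}. Since every state of $U(\mathcal{A})'$ is accessible by construction, the minimization $M(U(\mathcal{A})')$ is defined, and the definition of $M(\cdot)$ supplies a canonical deterministic automaton morphism $m: U(\mathcal{A})' \to M(U(\mathcal{A})')$. Functoriality of $F$ then yields $F(i): F(U(\mathcal{A})') \to F(U(\mathcal{A}))$ and $F(m): F(U(\mathcal{A})') \to F(M(U(\mathcal{A})'))$, both $K$-linear automaton morphisms.

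The main ``obstacle'', though it is very mild, is verifying that $\epsilon_{\mathcal{A}}$ really is a morphism of $K$-linear automata rather than merely a $K$-linear map; but this is precisely what Lemma \ref{lemma:extend} was established for, applied to the identity in $\mathfrak{D}(U(\mathcal{A}), U(\mathcal{A}))$. The rest is routine functoriality of $F$. Stringing the three morphisms together in the orientations required gives exactly the diagram displayed in the theorem, completing the proof.
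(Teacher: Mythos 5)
Your proposal is correct and follows essentially the same route as the paper: the first arrow is the counit of the adjunction $\psi$ (which the paper leaves implicit and you correctly identify as $\psi^{-1}(1_{U(\mathcal{A})})$, justified by Lemma \ref{lemma:extend}), and the other two arrows come from applying the functor $F$ to the accessible-subautomaton inclusion and the minimization morphism. Your write-up just supplies slightly more detail than the paper's own terse proof.
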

\begin{proof}
The morphism from $F(U(\mathcal{A}))$ is the counit of the adjunction
$\psi$ between $\mathfrak{A}$ and $\mathfrak{D}$.  The deterministic 
automaton $U(\mathcal{A})'$ is the accessible subautomaton of $U(\mathcal{A})$
and $i$ is the inclusion of $U(\mathcal{A})'$ into $U(\mathcal{A})$.  
The deterministic automaton morphism $m$ is the morphism from $U(\mathcal{A})'$
to $M(U(\mathcal{A})')$, the minimization of $U(\mathcal{A})'$.
\end{proof}
\begin{rmk}
The above sequence can be shortened since $\epsilon \circ F(i)$ is 
a morphism from $F(U(\mathcal{A})')$ to $\mathcal{A}$.
\end{rmk}
\begin{corollary}
Let $\mathcal{A}$ and $\mathcal{B}$ be equivalent right $K$-linear automata.
There is a sequence of $K$-linear automata and morphisms which witness the
equivalence.
\end{corollary}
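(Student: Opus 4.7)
The plan is to invoke Theorem~\ref{theorem:sequence} separately for $\mathcal{A}$ and for $\mathcal{B}$ and then splice the two resulting zigzags at their rightmost term. In the shortened form given in the remark after Theorem~\ref{theorem:sequence}, that theorem supplies a sequence
$$\mathcal{A} \longleftarrow F(U(\mathcal{A})') \longrightarrow F(M(U(\mathcal{A})')),$$
and analogously for $\mathcal{B}$. Once I know that $F(M(U(\mathcal{A})'))$ and $F(M(U(\mathcal{B})'))$ are literally the same $K$-linear automaton, I can concatenate the two sequences (traversing the $\mathcal{B}$-side in reverse) to obtain a witnessing zigzag from $\mathcal{A}$ to $\mathcal{B}$.

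First I would track the accepted language through the constructions. The hypothesis is $\rho_{\mathcal{A}} = \rho_{\mathcal{B}}$; Theorem~\ref{theorem:sameminimal} then yields $\rho_{U(\mathcal{A})} = \theta(\rho_{\mathcal{A}}) = \theta(\rho_{\mathcal{B}}) = \rho_{U(\mathcal{B})}$, and Lemma~\ref{lemma:accessible} transfers this to the accessible subautomata, giving $\rho_{U(\mathcal{A})'} = \rho_{U(\mathcal{B})'}$.

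The main obstacle is establishing the equality $M(U(\mathcal{A})') = M(U(\mathcal{B})')$. My approach is to exploit the construction of the minimization in terms of the final dlts $F$ from Section~\ref{section:determinize}: each minimization is realized as the image in $F$ of its input under the unique morphism $s \mapsto L_s$, endowed with the image of its original start state. Because $U(\mathcal{A})'$ and $U(\mathcal{B})'$ accept the same language, their start states are sent to the same element of $F$, namely the common accepted language viewed as a point of $O^{\Sigma^*}$. Since the transitions and outputs of the minimization are inherited from $F$, the state set of each minimization is precisely the subset of $F$ reachable from this common start point, and the start states agree; hence the two minimizations coincide as pointed subautomata of $F$, not merely up to isomorphism. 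Functoriality of $F$ then promotes this to the equality $F(M(U(\mathcal{A})')) = F(M(U(\mathcal{B})'))$ of $K$-linear automata.

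With that identification in hand, the desired witness is simply the concatenation
$$\mathcal{A} \longleftarrow F(U(\mathcal{A})') \longrightarrow F(M(U(\mathcal{A})')) = F(M(U(\mathcal{B})')) \longleftarrow F(U(\mathcal{B})') \longrightarrow \mathcal{B},$$
where the outer arrows come from the two applications of Theorem~\ref{theorem:sequence}. Everything else is routine: the right-hand arrows are $F(m_{\mathcal{A}})$ and $F(m_{\mathcal{B}})$, the left-hand arrows are the composites $\epsilon \circ F(i)$ from the remark, and no additional machinery beyond the adjunction of Theorem~\ref{theorem:adjoint} and the universal property of the final dlts is needed.
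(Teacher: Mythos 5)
Your proposal is correct and follows the same route as the paper: apply Theorem~\ref{theorem:sequence} to each automaton, observe via Theorem~\ref{theorem:sameminimal} (and Lemma~\ref{lemma:accessible}) that the two minimizations coincide, and paste the resulting sequences at their common endpoint. The paper simply asserts that equivalent deterministic automata "have the same minimization by definition," whereas you justify this explicitly via the final dlts; this is a welcome elaboration, not a divergence.
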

\begin{proof}
By Theorem \ref{theorem:sameminimal}, $U(\mathcal{A})$ and $U(\mathcal{B})$ are
equivalent deterministic automata, and therefore have the same minimization.
Applying Theorem \ref{theorem:sequence} to $\mathcal{A}$ and $\mathcal{B}$
yields sequences with the same endpoint; paste them together.
\end{proof}
\begin{rmk}
Theorem \ref{theorem:sequence} also holds for $K$-linear automata over arbitrary 
semirings, with some slight modifications.  In this case, we do not have an
algebra $K\Sigma^*$, but we can adjust the definition of a $K$-linear automaton 
to compute a map $\Sigma^* \rightarrow K$.
\end{rmk}
If the above sequence can be represented finitely, then one can ask questions
about the complexity of the proof system.  In \cite{bib:apgka}, it is shown that
such a sequence can be produced by a $PSPACE$ transducer for classical finite 
nondeterministic automata.  The morphisms can be represented by $|\Sigma|$ many
matrices; if the linear intertwining condition holds for the generators of the algebra,
it holds for the entire $K$-algebra.

\section{Acknowledgements}
The author would like to thank Anil Nerode for many inspiring discussions and the anonymous 
reviewers of LFCS '09 for their helpful comments and suggestions.  This work was
supported by NSF grant CCF-0635028.


\begin{thebibliography}{99}

\bibitem{bib:aac} Ad\`{a}mek, J. and Trnkov\`{a}, V.
\rm{Automata and Algebras in Categories}. {\em Kluwer Academic Publishers.}
1990.

\bibitem{bib:racom} Anderson, Frank W. and Fuller, Kent R.
\rm{Rings and Categories of Modules.}
{\em Springer-Verlag.}  1992.

\bibitem{bib:brwa} Buchholz, P.
\rm{Bisimulation Relations for Weighted Automata}.
{\em Theoretical Computer Science}, 393:109-123.  2008.

\bibitem{bib:ddlawm} Duchamp, G., Flouret, M., Laugerotte, \`{E}, and Luque, J.-G.
\rm{Direct and Dual Laws for Automata with Multiplicities.} 
{\em Theoretical Computer Science}.  267:105-120. 2001.

\bibitem{bib:sdsc} Duchamp, G. and Tollu, Christophe.
Sweedler's Duals and Sch\"{u}tzenberger's Calculus.
{\rm Arxiv Preprint}.	arXiv:0712.0125v2.

\bibitem{bib:bbv} Fitting, Melvin.
\rm{Bisimulations and Boolean Vectors.}
{\rm Advances in Modal Logic.} Volume 4:97-125.  2003.

\bibitem{bib:sata} Golan, Jonathan S.
\rm{Semirings and Their Applications.} {\em Kluwer Academic Publishers.} 1999

\bibitem{bib:bar} Grossman, R.L. and Larson, R.G.
{\rm Bialgebras and Realizations}. In Hopf Algebras: J. Bergen, S. Catoiu, and W. Chin, eds. pp 157-166.
{\it Marcel Dekker, Inc.} 2004.

\bibitem{bib:riob} Grossman, R.L. and Larson, R.G.
The Realization of Input-Output Maps Using Bialgebras.
{\it Forum Mathematicum.} Volume 4, pp. 109-121, 1992.  

\bibitem{bib:tpie} Katsov, Yefim.
\rm{Tensor Products and Injective Envelopes of Semimodules over Additively Regular Semirings}.
{\it Algebra Colloquium} 4:2 121-131. 1997.

\bibitem{bib:ko94} Kozen, Dexter. 
\rm{A Completeness Theorem for Kleene Algebras and the Algebra of Regular Events}.
{\it Infor. and Comput}, 110(2):366-390. May 1994.

\bibitem{bib:lang} Lang, Serge.
\rm{Algebra: Revised Third Edition}.
{\it Springer-Verlag}. 2002.

\bibitem{bib:tpis} Litvinov, G.L., Masloc, V.P., and Shpiz, G.B.
\rm{Tensor Products of Idempotent Semimodules.  An Algebraic Approach.}
{\it Mathematical Notes}. Vol 65, No. 4, 1999.

\bibitem{bib:fqgt} Majid, Shahn.
\rm{Foundations of Quantum Group Theory.}  {\it Cambridge University Press}. 1995

\bibitem{bib:aacaeic} Rutten, J.J.M.M.
{\rm Automata and Coinduction(An Exercise in Coalgebra).}
In {\it Proc. CONCUR '98}, {\rm volume 1466 of} {\it LNCS}, {\rm pages 194-218}.
Springer-Verlag, 1998.

\bibitem{bib:ucatos} Rutten, J.J.M.M.
{\rm Universal Coalgebra: A Theory of Systems}.
{\it Theoretical Computer Science}. 249 pp. 3-80.  2000.

\bibitem{bib:qgpca} Street, Ross.
\rm{Quantum Groups: A Path to Current Algebra.} {\it Cambridge University Press}. 2007

\bibitem{bib:apgka} Worthington, James.
Automatic Proof Generation in Kleene Algebra.  In R. Berghammer, B. M\"{o}ller, and G. Struth, editors,
{\it 10th Int. Conf. Relational Methods in Computer Science (RelMiCS10) and 5th Int. Conf Applications
of Kleene Algebra (AKA5)}, {\rm volume 4988 of} \it{LNCS}, {\rm pages 382-396}. Springer-Verlag, 2008.

\end{thebibliography}

\end{document}